\documentclass[11pt,a4paper]{amsart}

\usepackage{geometry}
\geometry{
	a4paper,
	total={170mm,257mm},
	left=20mm,
	top=20mm,
}

\usepackage[english]{babel}
\usepackage{amsmath,amssymb,amsthm,amscd}
\usepackage{url}
\usepackage{color,hyperref}
\usepackage{pst-plot} 


\theoremstyle{plain}
\newtheorem{theorem}{Theorem}[section]
\newtheorem{proposition}[theorem]{Proposition}
\newtheorem{lemma}[theorem]{Lemma}
\newtheorem{corollary}[theorem]{Corollary}

\theoremstyle{definition}

\newtheorem{remark}[theorem]{Remark}
\newtheorem{example}[theorem]{Example}

\numberwithin{equation}{section}

\newcommand{\N}{\mathbb{N}}
\newcommand{\Z}{\mathbb{Z}}
\newcommand{\Q}{\mathbb{Q}}
\newcommand{\R}{\mathbb{R}}
\newcommand{\set}[2]{\{#1\,|\ #2\}}
\newcommand{\sub}{\subseteq}

\begin{document}
\title[Congruence-simple matrix semirings]{Congruence-simple matrix semirings}

\author[V.~Kala]{V\'{i}t\v{e}zslav~Kala}
\address{Department of Algebra, Faculty of Mathematics and Physics, Charles University, Sokolovsk\'{a} 83, 186 75 Prague 8, Czech Republic}
\email{vitezslav.kala@matfyz.cuni.cz}

\author[T.~Kepka]{Tom\'{a}\v{s}~Kepka}
\address{Department of Algebra, Faculty of Mathematics and Physics, Charles University, Sokolovsk\'{a} 83, 186 75 Prague 8, Czech Republic}
\email{kepka@karlin.mff.cuni.cz}

\author[M.~Korbel\'a\v{r}]{Miroslav~Korbel\'a\v{r}}
\address{Department of Mathematics, Faculty of Electrical Engineering, Czech Technical University in Prague, Technick\'{a} 2, 166 27 Prague 6, Czech Republic}
\email{korbemir@fel.cvut.cz}

\thanks{The first author was supported by Czech Science Foundation (GA\v{C}R), grant 21-00420M, and Charles University Research Centre program UNCE/SCI/022. The third author acknowledges the support by the bilateral Austrian Science Fund (FWF) project I 4579-N and Czech Science Foundation (GA\v{C}R) project 20-09869L ``The many facets of orthomodularity''}

\keywords{simple semiring, bi-absorbing, semimodule, idempotent, semilattice}
\subjclass[2010]{06A12, 16Y60, 20M14}
\date{\today}


\begin{abstract}
It is well known that the full matrix ring over a skew-field is a simple ring. We generalize this theorem to the case of semirings. We characterize the case when the matrix semiring $\mathbf{M}_n(S)$, of all $n\times n$ matrices over a semiring $S$, is congruence-simple, provided that either $S$ has a multiplicatively absorbing element or  $S$ is commutative and  additively cancellative.
\end{abstract}

\maketitle
\vspace{4ex}

Classifying algebraic structures  that are \textit{simple} in the sense that they do not possess any non-trivial congruences is one of the general goals of algebra. Perhaps the most notorious example is the classification of finite simple groups, but also the case of rings is well-studied. The Wedderburn–-Artin theorem gives a characterization in the semisimple case in terms of products of matrix rings. 
In particular, it is well-known that for a simple ring $R$  (whether unitary or not, but with non-trivial multiplication) the full matrix ring $\mathbf{M}_n(R)$, of square  $n\times n$ matrices over $R$, is again a simple ring. 
So, for instance, the ring of all matrices over the rational numbers $\Q$ is simple. 

The broad goal of this paper is to study analogous questions for \textit{semirings}, i.e., sets $S$ equipped with two associative operations $+$ and $\cdot$ such that $+$ is commutative and the distributive property holds.
Semirings have been rising in prominence in the last few decades, especially thanks to their uses in fields such as cryptography  and tropical mathematics \cite{durcheva,ims,litvinov,mmr}.

In the case of \textit{commutative} multiplication, the classification of congruence-simple semirings is mostly complete \cite{simple_comm}, with the exception of subsemirings of positive real numbers $\mathbb R^+$. Let us also briefly note that the \textit{ideal-simple} commutative case turned out to be quite tricky \cite{JKK, KalKor}.

Non-commutative congruence-simple semirings $S$ are much less understood, despite some partial results \cite{simple, monico}.
In particular, a basic classification \cite[Theorem 2.1]{simple} shows that  $S$ is additively cancellative, idempotent, or zeropotent (or the addition  $+$ is trivial), and there are some further results in these cases. In particular, much more is known when $S$ is finite \cite{monico}.  However, the general classification program is likely to be very hard, and so it is necessary to first find a wide range of  examples and possible constructions. 

Matrix semirings again present such a natural class of candidates for simplicity.
While other features of matrices over semirings are already known in more details (e.g., \cite{zumbragel,shitov}),
we are missing a characterization for when the matrix semiring is simple, in contrast with the ring case.
 Even for subsemirings of $\Q^+$, the semiring of positive rational numbers,  the corresponding matrix semirings are not so well understood.
 
\bigskip

The aim of this paper is to  fill this gap. 
Our main result  is a characterization of when the semiring $\mathbf{M}_n(S)$ is congruence-simple  under the assumption that 
\begin{itemize}
 \item[-]  $S$ has a multiplicatively absorbing element (Theorem \ref{6.4}), or
  \item[-]   $S$ is additively cancellative and downwards directed (Theorem \ref{theorem_cancellative_2}), or
 \item[-]  $S$ is additively cancellative and commutative  (Theorem \ref{theorem_cancellative}).
\end{itemize}

Our theorems generalize \cite[Proposition 5.5]{KNZ-2} that concerned the case of semirings with zero and unity. 
The restriction to semirings that have both of these special elements usually allows one to obtain results similar as in the ring case, for instance, the generalization of the Wedderburn--Artin theorem  \cite{KNZ-2,KNZ}. Also the question of when a given semiring $T$ is isomorphic to a full matrix semiring  $\mathbf{M}_n(S)$ for some semiring $S$ was answered in several ways \cite{bogdanov,golan2,robson} for semirings with  zero and  unity.
Moreover, the well-known correspondence between  the semiring $\mathbf{M}_n(S)$ and all $S$-endomorphisms of a certain $S$-semimodule holds precisely in this case (see, e.g., Proposition \ref{unity_1}).

On the other hand, this is certainly not the only interesting case and semirings without a zero element or without a unity also deserve attention (as evidenced, e.g., by the subsemirings of $\R^+$ or by certain lattices). The contribution of our paper is mainly for those general cases of semirings. We also present examples of  semirings (Example \ref{ex_1}) allowing the corresponding matrix semirings to be congruence-simple but without both a zero and a unity.

\section{Preliminaries}

A \emph{semiring} $S$ for our consideration will be a non-empty set equipped with two associative binary operations, usually denoted as addition $+$ and multiplication $\cdot$ such that the addition is commutative and the multiplication distributes over the addition from both sides.

Let $S=S(+,\cdot)$ be a semiring. For  subsets $A,B$ of $S$ we set $A+B=\set{a+b}{a\in A, b\in B}$ and $AB=\set{ab}{a\in A, b\in B}$. Further, we will use the \emph{standard quasiordering} $\leq_S$ on $S$ that is defined by $a\leq_S b$ if $a=b$ or if there is $c\in S$ such that $a+c=b$. Clearly, $\leq_S$ is transitive and stable with respect to the operations of the semiring $S$.

A non-empty subset $I$ of $S$ is a \emph{bi-ideal} of $S$ if $a+s,a s, s a\in I$ for every $a\in I, s\in S$. 
A \textit{congruence} on a semiring $S$ is an equivalence relation that is preserved under the addition and multiplication.

The semiring $S$ is called
\begin{itemize}
 \item \emph{congruence-simple} if $S$ has just two congruences;
 \item \emph{bi-ideal-simple} if $|S|\geq 2$ and for every bi-ideal $I$ of $S$ either $|I|=1$ or $I=S$;
 \item \emph{bi-ideal-free} if $|S|\geq 2$ and the only bi-ideal of $S$ is $S$ itself;
 \item \emph{commutative} if the semigroup $S(\cdot)$ is commutative;
 \item \emph{additively idempotent} if $a+a=a$ for every $a\in S$;
 \item \emph{additively cancellative} if $a+c\neq b+c$ for all $a,b,c\in S$ such that $a\neq b$;
 \item \emph{additively archimedean} if for all $a,b\in S$ there is  $m\in\N$ such that $a\leq_S mb$;
  \item \emph{additively (uniquely) $m$-divisible} (for $m\in\N$) if for every $a\in S$ there is (a unique) $b\in S$ such that $a=mb$.
\end{itemize}

An element $w\in S$ is called
\begin{itemize}
\item \emph{multiplicatively} (\emph{additively}, resp.) \emph{absorbing} if $aw=w=wa$ ($a+w=w$, resp.) for every $a\in S$; 
\item \emph{a unity} (\emph{additively neutral}, resp.)  if $aw=a=wa$ ($a+w=a$, resp.) for every $a\in S$ (a unity will be denoted by $1_S$);
\item \emph{bi-absorbing} if $w$ is both multiplicatively and additively absorbing (such an element will be denoted by $o_S$);
\item a \emph{zero} if $w$ is multiplicatively absorbing and additively neutral (such an element will be denoted by $0_S$).
\end{itemize}

Notice that the element $w\in S$ is bi-absorbing if and only if $\{w\}$ is a bi-ideal (and therefore $S$ has at most one trivial bi-ideal).

Also, for every bi-ideal $I$ of $S$, the relation $(I\times I)\cup id_S$ is a congruence on $S$. Hence if $S$ is congruence-simple, then $S$ is also bi-ideal-simple.

For some further information on semirings see, e.g., \cite{G,weinert}.

\bigskip

Let $S$ be a semiring and $n\geq 1$. By $\mathbf{M}_n(S)$ we denote the full matrix semiring of (square) $n\times n$-matrices
 with entries from $S$ with the usual matrix operations: addition $(a_{ij})+(b_{ij})=(a_{ij}+b_{ij})$ and multiplication $(a_{ij})(b_{ij})=(\sum_{k}a_{ik}b_{kj})$.

 For every $a\in S$, let $\overline{a}=(a_{ij})\in \mathbf{M}_n(S)$ be the matrix such that $a_{ij}=a$ for all $i,j$. Clearly, $\overline{a}+\overline{b}=\overline{a+b}$ and $\overline{a}\overline{b}=n\overline{ab}$. Thus $\overline{S}=\set{\overline{a}}{a\in S}$ is a subsemiring of $\mathbf{M}_n(S)$.

\bigskip

 First, we investigate the case when a matrix semiring possesses certain significant elements (a unity, a zero, a bi-absorbing or multiplicatively absorbing element).

 \begin{proposition}\label{2.5}
 Let $S$ be a semiring and $n\geq 1$. Then for $T=\mathbf{M}_n(S)$ the following holds:
 \begin{enumerate}
 \item[(i)] $0_T\in T$ if and only if $0_S\in S$. If one of the conditions is fulfilled, then $0_T=\overline{0_{S}}$.

 \item[(ii)]  $o_T\in T$ if and only if $o_S\in S$. If one of the conditions is fulfilled, then $o_T=\overline{o_S}$.
 \end{enumerate}
 \end{proposition}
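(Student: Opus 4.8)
The plan is to treat parts (i) and (ii) together, since they have the same structure, relying on the subsemiring $\overline{S}$ and the two identities $\overline{a}+\overline{b}=\overline{a+b}$ and $\overline{a}\,\overline{b}=n\overline{ab}$ recorded just above. When $n=1$ we have $T\cong S$ and everything is trivial, so this case needs no separate treatment — the argument below covers it uniformly.

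For the ``if'' directions I would simply check that $\overline{0_S}$, respectively $\overline{o_S}$, does the job. If $0_S\in S$, then an entrywise computation gives $(\overline{0_S}+A)_{ij}=0_S+a_{ij}=a_{ij}$ and $(\overline{0_S}\,A)_{ij}=\sum_k 0_S\,a_{kj}=0_S$, and likewise $A\,\overline{0_S}=\overline{0_S}$, for every $A=(a_{ij})\in T$; hence $\overline{0_S}$ is a zero of $T$, and since a zero is unique, $0_T=\overline{0_S}$. Replacing ``$0_S+a=a$'' by ``$o_S+a=o_S$'' throughout gives the corresponding statement for $o_S$.

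For the ``only if'' directions I would first pin down the shape of the distinguished matrix and then the multiplicative behaviour of its entries. Suppose $0_T=(z_{ij})$ exists. Adding $\overline{a}$ for an arbitrary $a\in S$ and comparing entries yields $z_{ij}+a=a$ for all $i,j$ and all $a$, so each $z_{ij}$ is additively neutral in $S$; as such an element is unique, all the $z_{ij}$ coincide with one element $z$, i.e.\ $0_T=\overline{z}$. Now multiplicative absorption forces $\overline{z}\,\overline{a}=n\overline{za}$ to equal $\overline{z}$, that is, $n(za)=z$ for every $a\in S$, and symmetrically $n(az)=z$. The apparent obstacle — the stray factor $n$ — disappears once one observes that $za$ is additively idempotent: since $z+z=z$ (as $z$ is additively neutral), we get $za+za=(z+z)a=za$, hence $n(za)=za$ and therefore $za=z$; likewise $az=z$. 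Thus $z$ is additively neutral and multiplicatively absorbing, i.e.\ $z=0_S$.

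Part (ii) is then the same argument word for word with ``additively neutral'' replaced by ``additively absorbing'': from $o_T=(w_{ij})$ and $o_T+\overline{a}=o_T$ one obtains $w_{ij}+a=w_{ij}$ for all $a$, so each $w_{ij}$ is additively absorbing, these being unique one gets $o_T=\overline{w}$, and then $\overline{w}\,\overline{a}=n\overline{wa}=\overline{w}$ together with the additive idempotency of $wa$ (which again follows from $w+w=w$) gives $wa=w=aw$, so $w=o_S$. The one genuine point of the proof is the coefficient $n$ coming from $\overline{a}\,\overline{b}=n\overline{ab}$, which is handled by passing to the additively idempotent elements $za$ (resp.\ $wa$) rather than trying to cancel $n$; everything else is a routine entrywise verification.
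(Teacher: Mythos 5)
Your proof is correct and follows essentially the same route as the paper: extract an additively neutral (resp.\ absorbing) element $\omega$ from the entries of $0_T$ (resp.\ $o_T$), then use multiplicative absorption against suitable matrices to show $\omega a=a\omega=\omega$, handling the factor $n$ from $\overline{\omega}\,\overline{a}=n\overline{\omega a}$ via the additive idempotency $\omega+\omega=\omega$. The only cosmetic difference is in part (i), where the paper tests absorption against a matrix with a single entry $b$ and the rest equal to $\omega$, while you test against $\overline{a}$ uniformly in both parts — both variants work for the same reason.
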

\begin{proof}
 (i) Let $0_T=(\omega_{ij})$ be a zero element in $T$. Then for every $A=(a_{ij})\in T$ it holds that $\omega_{ij}+a_{ij}=a_{ij}$ for all $i,j$. Hence $\omega_{ij}=\omega\in S$ is an additively neutral element in $S$.

 Now choose $b\in S$. Since $0_T$ is multiplicatively absorbing in $T$, we have $0_T\cdot B=0_T$ for every $B=(b_{ij})\in T$ such that $b_{11}=b$ and $b_{ij}=\omega$ for all $(i,j)\neq(1,1)$. Hence $\omega=\omega\cdot b+\omega\cdot \omega+\cdots+\omega\cdot\omega=\omega(b+\cdots+\omega)=\omega\cdot b$.  By a dual argument, we obtain that $\omega=b\cdot\omega$ for every $b\in S$.  Hence $\omega=0_S$ is a zero element in $S$. The rest is easy.

 (ii) Let $o_T=(\omega_{ij})\in T$. Then for every $A=(a_{ij})\in T$ it holds that $\omega_{ij}+a_{ij}=\omega_{ij}$ for all $i,j$. Hence $\omega_{ij}=\omega\in S$ is additively absorbing in $S$.

 Further, $o_T\cdot \overline{b}=o_T$ for every $b\in S$. Hence $\omega=\omega\cdot b+\cdots+\omega\cdot b=\omega(nb)=(n\omega)\cdot b=\omega\cdot b$. Thus $\omega$ is left multiplicatively absorbing. By a dual argument, we obtain that $\omega$ is right multiplicatively absorbing.  Hence $\omega=o_S$ is a bi-absorbing element in $S$. The rest is easy.
\end{proof}

\begin{proposition}\label{2.4}
  Let $n\geq 2$ and $S$ be a semiring.  Then  the following are equivalent:
 \begin{enumerate}
  \item[(i)] $\mathbf{M}_n(S)$ has a zero and a unity.
  \item[(ii)] $\mathbf{M}_n(S)$ has a unity.
  \item[(iii)] $S$ has a zero and a unity.
 \end{enumerate}
 If one of the condition is fulfilled, then the unity $1_T=(e_{ij})$ of $T=\mathbf{M}_n(S)$ is of the form $e_{ii}=1_S$ and $e_{ij}=0_S$ for $i\neq j$.
\end{proposition}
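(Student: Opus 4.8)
The plan is to dispatch the two easy implications first and then concentrate on (ii)$\Rightarrow$(iii), which will simultaneously yield the description of $1_T$. The implication (i)$\Rightarrow$(ii) is immediate. For (iii)$\Rightarrow$(i): if $S$ has a zero and a unity, then by Proposition~\ref{2.5}(i) the semiring $\mathbf{M}_n(S)$ has the zero $\overline{0_S}$, and the matrix $E=(e_{ij})$ with $e_{ii}=1_S$ and $e_{ij}=0_S$ for $i\neq j$ is readily checked to be a unity of $\mathbf{M}_n(S)$. So the content is the implication (ii)$\Rightarrow$(iii). Write $1_T=(e_{ij})$ for the unity of $T=\mathbf{M}_n(S)$. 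Reading the identities $A1_T=A$ and $1_TA=A$ one row (resp.\ one column) at a time, and using that the entries of a single row (resp.\ column) of $A$ may be prescribed arbitrarily and independently, one obtains the two families of relations
\[
\sum_{k} x_k e_{kj}=x_j \qquad\text{and}\qquad \sum_{k} e_{ik}x_k=x_i
\]
valid for all $i,j$ and all tuples $(x_1,\dots,x_n)\in S^n$; everything below is squeezed out of these by suitable choices of the tuple.

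First I would produce a unity of $S$. Plugging the constant tuple $x_k\equiv a$ into the relations shows that every row sum $\sum_k e_{ik}$ is a left unity of $S$ and every column sum $\sum_k e_{kj}$ is a right unity; since a left unity times a right unity equals both of them, all these sums coincide in a single two-sided unity $1_S$ of $S$, which is moreover the common value of every row sum and every column sum of $1_T$.

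Next, the heart of the matter: producing a zero of $S$. Plugging the tuple $x_1=z$, $x_k=1_S$ $(k\geq 2)$ into the second family for $i=1$ gives $e_{11}z+w=z$ for all $z\in S$, where $w:=\sum_{k\geq 2}e_{1k}$ satisfies $e_{11}+w=1_S$. Substituting first $z=w$ and then $z=w+u$ into this identity forces $w$ to be additively idempotent, and then adding $w$ to both sides of $e_{11}z+w=z$ forces $w$ additively neutral; consequently $e_{11}z=z$ for all $z$, so $e_{11}$ is a left unity, and the mirror computation with the first family makes it a right unity too, hence $e_{11}=1_S$. Since additively neutral elements are unique, $w$ is \emph{the} additively neutral element; denote it $0_S$. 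Now feed the tuple $x_1=z$, $x_k=y$ $(k\geq 2)$ into the same relation: this gives $z+0_S y=z$ for all $z,y$, so $0_S y$ is additively neutral, i.e.\ $0_S y=0_S$; symmetrically $y\,0_S=0_S$. Thus $0_S$ is multiplicatively absorbing, hence a zero of $S$, proving (iii). Finally, running the argument of this paragraph at an arbitrary index gives $e_{ii}=1_S$ for every $i$, and once $0_S$ is known to be a zero one plugs the tuple $x_p=1_S$, $x_k=0_S$ $(k\neq p)$ into the relation $\sum_k e_{ik}x_k=x_i$ with $i\neq p$ and reads off $e_{ip}=0_S$; this is the asserted form of $1_T$.

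The main obstacle is exactly that $S$ is \emph{not} assumed to have a zero, so one cannot invoke matrix units $E_{pq}$ nor isolate a single entry of a matrix by zeroing out the rest: all information has to come from matrices that can genuinely be written over $S$ (constant rows, or one distinguished entry against a background of $1_S$'s). Accordingly the delicate step is obtaining the zero — proving that the additively neutral ``defect'' $w$ is in fact multiplicatively absorbing. That is where the second, less obvious choice of tuple (the last $n-1$ coordinates all equal to a free parameter $y$) is essential, and, together with the mere formation of $w$, it is the only place where $n\geq 2$ enters; for $n=1$ the statement indeed fails, e.g.\ for $S=\Q^+$.
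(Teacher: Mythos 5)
Your proof is correct and follows essentially the same route as the paper: both read off the scalar identities $\sum_k e_{ik}x_k=x_i$ and $\sum_k x_k e_{kj}=x_j$ from $1_TA=A=A1_T$, show that the off-diagonal contribution is additively neutral and then multiplicatively absorbing (hence a zero), and conclude $e_{ii}=1_S$. The paper merely packages this via the one-sided ideals $R_i=\set{\sum_{j\neq i}e_{ij}a_j}{a_j\in S}$ and $L_j$, whose closure under addition and right/left multiplication does the work of your explicit substitutions $z=w$, $z=w+u$ and of the tuple $x_k=y$; the substance is identical.
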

\begin{proof}
 (i) implies (ii) trivially and (iii) implies (i) by Proposition \ref{2.5}(i).

 (ii)$\Rightarrow$(iii): Let $1_T=(e_{ij})$ be a unity. For $1\leq i\leq n$ set $R_{i}=\set{\sum_{j,j\neq i}e_{ij}a_j}{a_j\in S}\sub S$. Clearly, $R_i$ is a right ideal of the semiring $S$, (i.e., $R_i\cdot S\sub R_i$ and  $R_i+R_i\sub R_i$). As $1_T$ is the unity of $T$, we have $e_{ii}a+u=a$ for every $i$ and all $a\in S$ and $u\in R_i$. Consequently, $a=e_{ii}a+u+v=a+v$ for all $a\in S$ and $u,v\in R_i$. Hence there is an additively neutral element in $w\in S$ and $R_i=\{w\}$ for all $i$. Thus, $\{w^2\}\sub wS=R_iS\sub R_i=\{w\}$ and it follows that $w^2=w$ and $wS=\{w\}$.

 Similarly, $L_j=\set{\sum_{i,i\neq j} a_ie_{ij}}{a_i\in S}$  is a left ideal of $S$ (i.e., $S\cdot L_j\sub L_j$ and  $L_j+L_j\sub L_j$) and, by a symmetrical argument, we obtain that $L_j=\{w\}$ for all $j$ and $Sw=\{w\}$.
This  means that $w=0_S$ is a zero element in $S$.

Finally, for every $i$ and all $a\in S$ and $v\in R_i=\{0_S\}$  we have $a=e_{ii}a+v$. Therefore $a=e_{ii}a$.  Similarly, it holds that $a=ae_{ii}$.  We conclude that $e_{ii}=1_S$ is a unity in $S$ for every $i$.  Of course, $e_{ij}=0_S$ for $i\neq j$.
\end{proof}

\begin{proposition}\label{unity_1}
Let $n\geq 2$ and $S$ be a semiring.  Then  the following are equivalent:
 \begin{enumerate}
 \item[(i)] There are a semiring $R$ and a left $R$-semimodule $_RM$ such that the semiring $\mathrm{End}(_RM)$ (of all $R$-endomorphisms  of $_RM$) is isomorphic to $\mathbf{M}_n(S)$.
 \item[(ii)] $S$ has a zero and a unity.
 \end{enumerate}
\end{proposition}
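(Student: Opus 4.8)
The plan is to handle the two implications quite asymmetrically, because one of them follows almost immediately once we recall a basic feature of endomorphism semirings and combine it with Proposition~\ref{2.4}.

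\emph{Direction (i)~$\Rightarrow$~(ii).} The point is that an endomorphism semiring \emph{always} has a two-sided unity, namely the identity map $\mathrm{id}_M$. Concretely, I would first check that $\End({}_RM)$, equipped with the pointwise addition $(\varphi+\psi)(m)=\varphi(m)+\psi(m)$ and with composition as its multiplication, really is a semiring in the sense of this paper: it is non-empty (it contains $\mathrm{id}_M$); its addition is commutative and associative, being inherited from the commutative additive semigroup of $M$; the sum of two $R$-endomorphisms is again an $R$-endomorphism (this uses that the $R$-action on $M$ distributes over the addition of $M$); composition is associative; and both distributive laws hold (the left one because every $\varphi$ is additive on $M$). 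Since $\mathrm{id}_M$ is a unity of $\End({}_RM)$ and $\End({}_RM)\cong\mathbf{M}_n(S)$, the semiring $\mathbf{M}_n(S)$ has a unity, so Proposition~\ref{2.4} (implication (ii)~$\Rightarrow$~(iii), valid for $n\ge 2$) yields that $S$ has both a zero and a unity.

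\emph{Direction (ii)~$\Rightarrow$~(i).} Here I would use the standard construction. Assume $S$ has a zero $0_S$ and a unity $1_S$, take $R=S$, and let $M$ be the free left $S$-semimodule of rank $n$ with the usual coordinatewise operations. The presence of $0_S$ and $1_S$ is exactly what guarantees that $M$ carries the standard basis $e_1,\dots,e_n$, so that every $\varphi\in\End({}_SM)$ is determined by the $n$ values $\varphi(e_j)$; recording these values as the columns of a matrix over $S$ produces a map $\End({}_SM)\to\mathbf{M}_n(S)$, and I would then verify that it is additive, that composition of endomorphisms corresponds to the matrix product, and that it is bijective. (If the convention one uses for composing endomorphisms of a left semimodule produces $\mathbf{M}_n(S)^{\mathrm{op}}$ instead of $\mathbf{M}_n(S)$, it suffices to replace $S$ by $S^{\mathrm{op}}$, which again has a zero and a unity, since $\mathbf{M}_n(S)^{\mathrm{op}}\cong\mathbf{M}_n(S^{\mathrm{op}})$ via transposition.)

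\emph{Main obstacle.} There is no deep difficulty: the statement is essentially the combination of the trivial remark ``an endomorphism semiring has a unity'' with the non-trivial Proposition~\ref{2.4}. The only steps needing care are bookkeeping — verifying in the first direction that $\End({}_RM)$ satisfies all the semiring axioms, and, in the second direction, tracking the left/right (hence possible ``op'') conventions in the matrix representation. I expect the latter to be the most error-prone point, so I would fix the conventions explicitly before computing $\varphi(e_j)$ and the composition product.
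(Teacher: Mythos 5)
Your proposal is correct and follows essentially the same route as the paper: the forward direction is exactly the observation that $\mathrm{id}_M$ gives $\mathbf{M}_n(S)$ a unity, followed by Proposition~\ref{2.4}, and the converse is the standard free-semimodule construction with $R=S$ and $M=S^n$ (the paper sidesteps your ``op'' concern by letting matrices act on the right, $\varphi(m)=m\cdot A$). No gaps.
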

\begin{proof}
(i)$\Rightarrow$(ii): As the semiring $\mathrm{End}(_RM)$ contains the identical map $id_M$, the semiring $\mathbf{M}_n(S)$ contains a unity. Hence, by Proposition \ref{2.4}, $S$ has a zero and a unity.

(ii)$\Rightarrow$(i):
Let $S$ have a zero and a unity. Set $R=S$ and $M=S\times \cdots \times S$ ($n$-times). Then the left $S$-semimodule $_SM$ is free with a free basis $e_1,\dots,e_n$ (of a standard form). Hence for every $S$-endomorphism $\varphi$ of $_SM$ there is a unique matrix $A\in \mathbf{M}_n(S)$ such that $\varphi(m)=m\cdot A$ for every $m\in\, _SM$. It follows that $\mathrm{End}(_SM)\cong \mathbf{M}_n(S)$. 
\end{proof}

\begin{proposition}\label{2.6}
 Let $n\geq 1$. Then the set $S_{n}=S+\cdots+ S$ ($n$-times) is a bi-ideal of $S$  and the following conditions are equivalent:
 \begin{enumerate}
  \item[(i)] $\mathbf{M}_n(S)$ has a multiplicatively absorbing element.
  \item[(ii)] $S_{n}$ has a multiplicatively absorbing element (if $w$ is such an element, then $\overline{w}$ is the multiplicatively absorbing element of $\mathbf{M}_n(S)$).
 \end{enumerate}
 If moreover $S+S=S$, then $S_n=S$ and the conditions above are equivalent to the condition 
 \begin{enumerate}
 \item[(iii)]  $S$ has a multiplicatively absorbing element.
 \end{enumerate}
\end{proposition}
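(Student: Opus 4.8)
The plan is to verify the bi-ideal claim by hand and then run the cycle $(\mathrm{ii})\Rightarrow(\mathrm{i})\Rightarrow(\mathrm{ii})$, closing with the case $S+S=S$ as a one-line induction. For the bi-ideal assertion I would simply take $a=s_1+\dots+s_n\in S_n$ with $s_i\in S$ and $s\in S$, and observe that $a+s=(s_1+s)+s_2+\dots+s_n$, $as=s_1s+\dots+s_ns$ and $sa=ss_1+\dots+ss_n$ all lie in $S_n$; this uses only associativity and the two distributive laws, and the case $n=1$ (where $S_1=S$) is trivial.

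For $(\mathrm{ii})\Rightarrow(\mathrm{i})$, assume $w\in S_n$ is multiplicatively absorbing in $S_n$. I would show $\overline{w}$ works in $\mathbf{M}_n(S)$: for any $A=(a_{ij})$ and any column index $j$, the element $a_{1j}+\dots+a_{nj}$ lies in $S_n$, so by left distributivity the $(i,j)$-entry of $\overline{w}A$ is $\sum_k w\,a_{kj}=w\,(a_{1j}+\dots+a_{nj})=w$, whence $\overline{w}A=\overline{w}$, and dually $A\overline{w}=\overline{w}$. Since a multiplicatively absorbing element is unique when it exists, $\overline{w}$ is \emph{the} multiplicatively absorbing element of $\mathbf{M}_n(S)$, which also gives the parenthetical remark.

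The implication $(\mathrm{i})\Rightarrow(\mathrm{ii})$ is where the real work is, and the key move is to test a multiplicatively absorbing $W=(\omega_{ij})$ of $\mathbf{M}_n(S)$ against the constant matrices $\overline{a}$, $a\in S$. From $W\overline{a}=W$ one reads $\omega_{ij}=\bigl(\sum_k\omega_{ik}\bigr)a$, which is independent of $j$, and from $\overline{a}W=W$ one gets $\omega_{ij}=a\bigl(\sum_k\omega_{kj}\bigr)$, independent of $i$; hence all entries of $W$ coincide, say $W=\overline{\omega}$, and, setting $n\omega=\sum_k\omega_{ik}$, the two identities become $\omega=(n\omega)a=a(n\omega)$ for every $a\in S$. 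Then $w:=n\omega$ lies in $S_n$, and for $b=t_1+\dots+t_n\in S_n$ one computes $wb=\sum_i(n\omega)t_i=\sum_i\omega=n\omega=w$ and symmetrically $bw=w$, so $w$ is multiplicatively absorbing in $S_n$. I expect the only genuine subtlety here to be realizing that the element of $S$ which must be absorbing is $n\omega$ (automatically a sum of $n$ terms, hence in $S_n$) and not $\omega$ itself.

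Finally, if $S+S=S$, then $S+\dots+S=S$ ($n$ terms) follows by an immediate induction, i.e.\ $S_n=S$, so conditions $(\mathrm{ii})$ and $(\mathrm{iii})$ literally coincide.
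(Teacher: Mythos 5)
Your proof is correct and follows essentially the same route as the paper: multiply the absorbing matrix $W$ by suitable test matrices to see that all its entries coincide and that the resulting row/column sums act absorbingly on $S_n$. The only cosmetic difference is that the paper tests $W$ against matrices with arbitrary columns and so identifies the common entry $\omega$ itself as the absorbing element of $S_n$, whereas your restriction to the constant matrices $\overline{a}$ forces you to pass to $n\omega$; by uniqueness of multiplicatively absorbing elements these coincide, so nothing is lost.
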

\begin{proof}
 (i)$\Rightarrow$(ii): Let  $W=(w_{ij})$ be a multiplicatively absorbing element in $\mathbf{M}_n(S)$. Then $w_{i1}a_1+\cdots+w_{in}a_n=w_{ij}$ for all $a_1,\dots,a_n\in S$ and all $i,j$. Hence for  $k\neq j$ it holds that $w_{ik}=w_{i1}a_1+\cdots+w_{in}a_n=w_{ij}$.   Consequently, $w_{i1}=\cdots=w_{in}=w_i$ and $w_i S_{n}=\{w_i\}$. Dually, $a_1w_1+a_2w_2+\cdots+a_nw_n=w_i$ for all $i$ and $a_1,\dots,a_n\in S$. Thus $w_1=w_2=\cdots=w$ and $w S_{n}=\{w\}=S_{n}w$.

The rest of the implications is easy.
\end{proof}

\section{Congruence-simple matrix semirings}

In this section we provide a characterization of congruence-simple matrix semirings that do contain a multiplicatively absorbing element. This result will be a generalization of \cite[Proposition 5.5]{KNZ-2}, where the semirings with both zero and a unity element are assumed.

Let us start by recalling basic properties of congruence-simple semiring from \cite{simple}.

\begin{lemma}\cite[Lemma 2.2]{simple}\label{6.0}
 Let $S$ be a congruence-simple semiring such that $|SS|\geq 2$. Then at least one of the following conditions holds:
 \begin{enumerate}
 \item[(i)] $|S|=2$, $S$ is both multiplicatively and additively idempotent and $S$ has no multiplicatively absorbing element.
 \item[(ii)] For every $a,b\in S$ such that $a\neq b$ there are $c,d\in S$ such that $ca\neq cb$ and $ad\neq bd$.
 \end{enumerate}
\end{lemma}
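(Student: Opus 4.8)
The plan is to control the congruences of $S$ through two naturally associated ``cancellability'' congruences. Set
$\rho=\set{(a,b)\in S\times S}{ca=cb \text{ for all }c\in S}$ and, symmetrically,
$\lambda=\set{(a,b)\in S\times S}{ad=bd \text{ for all }d\in S}$.
Both are congruences: compatibility with $+$ follows from distributivity, compatibility with multiplication on the ``defining'' side is immediate, and on the other side it reduces to the defining property by associativity (e.g.\ $(a,b)\in\rho$ gives $c(ad)=(ca)d=(cb)d=c(bd)$, so $(ad,bd)\in\rho$). Since $S$ is congruence-simple, each of $\rho,\lambda$ is $\mathrm{id}_S$ or $S\times S$. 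If $\rho=\lambda=\mathrm{id}_S$, then for any $a\neq b$ the pair $(a,b)$ lies in neither relation, which is precisely the statement that there are $c,d$ with $ca\neq cb$ and $ad\neq bd$; this is case (ii). So from now on at least one of $\rho,\lambda$, say $\rho$, equals $S\times S$ (the case $\lambda=S\times S$ is handled by the left--right mirror of what follows), and the goal is to reach case (i).

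Suppose $\rho=S\times S$, i.e.\ $ca=cb$ for all $a,b,c\in S$; taking $b=c$ gives $ca=c^{2}$ for all $a,c$, so every left translation $x\mapsto cx$ is constant with value $c^{2}$. From this I would extract three facts: $c^{2}+c^{2}=c^{2}$ (evaluate $x\mapsto cx$ at $a+b$), $(x+y)^{2}=x^{2}+y^{2}$ (compute $(x+y)x$ in two ways), and $x^{n}=x^{2}$ for all $n\geq 2$ (so each $x^{2}$ is multiplicatively idempotent). Consequently the squaring map $\pi\colon S\to SS$, $\pi(x)=x^{2}$, is a well-defined surjective semiring homomorphism, and its congruence kernel $\ker\pi=\set{(a,b)}{a^{2}=b^{2}}$ is therefore $\mathrm{id}_S$ or $S\times S$. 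It cannot be $S\times S$, since that would force $|SS|=1$, contrary to hypothesis; hence $\pi$ is injective, and so an isomorphism $S\cong SS$. Now $\pi(x+x)=x^{2}+x^{2}=x^{2}=\pi(x)$ and $\pi(x^{2})=x^{4}=x^{2}=\pi(x)$, so injectivity of $\pi$ yields $x+x=x$ and $x^{2}=x$ for every $x$; together with $xy=x^{2}$ this gives $xy=x$, i.e.\ $S$ is a left-zero band under multiplication.

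To finish, observe that because $xy=x$ for all $x,y$, multiplication places no restriction on congruences of $S$: every equivalence relation on $S$ that is compatible with $+$ is already a semiring congruence. Thus the congruences of $S$ are exactly those of the semilattice $(S,+)$, and since $S$ is congruence-simple with $|S|\geq 2$ (as $|SS|\geq 2$), the classical fact that a congruence-simple semilattice has exactly two elements forces $|S|=2$. A two-element left-zero band has no multiplicatively absorbing element, for such a $w$ would satisfy $a=aw=w$ for all $a$. Hence $S$ has two elements, is additively and multiplicatively idempotent, and has no multiplicatively absorbing element, which is case (i).

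The only real obstacle is the degenerate branch $\rho=S\times S$ (equivalently $\lambda=S\times S$): one has to recover the entire structure of $S$ from the single identity $ca=c^{2}$. The device that makes this work is the squaring homomorphism onto $SS$, whose kernel must collapse by congruence-simplicity yet cannot collapse too much because $|SS|\geq 2$; the remaining step then only needs the standard description of congruence-simple semilattices. Everything outside this branch is a short formal computation.
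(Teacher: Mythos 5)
The paper does not prove this lemma---it is quoted verbatim from \cite[Lemma 2.2]{simple}---so there is no in-text proof to compare against; your argument is correct as a self-contained proof and follows the natural (and, in essence, the original) strategy. The two one-sided annihilator congruences $\rho$ and $\lambda$ are exactly the right tool, and the delicate branch $\rho=S\times S$ is handled soundly: squaring is indeed an endomorphism there, its kernel cannot be $S\times S$ because $|SS|\geq 2$, and injectivity then forces $x+x=x$ and $x^2=x$, reducing everything to the standard fact that a congruence-simple semilattice has exactly two elements. The only ingredient you invoke without proof is that last classical fact about semilattices, which is legitimately standard in this literature.
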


\begin{proposition}\cite[Proposition 3.4]{simple}\label{6.0.1}
 Let $S$ be a semiring. If the semiring $\mathbf{M}_n(S)$ is congruence-simple for some $n\geq 2$, then:
  \begin{enumerate}
  \item[(i)] $S$ is congruence-simple.
  \item[(ii)] $S$ contains no bi-absorbing element.
  \item[(iii)] $|SS|\geq 2$.
  \item[(iv)] $S$ is either additively idempotent or additively cancellative (and therefore $|S+S|\geq 2$).
 \end{enumerate}
\end{proposition}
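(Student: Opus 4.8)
\emph{Sketch of the intended proof.} The four items can be handled almost independently; only the observation that \textup{(i)} forces $|S|\ge 2$ (since $|\mathbf{M}_n(S)|=|S|^{n^2}$ and a one-element semiring is not congruence-simple) will be reused afterwards. For \textup{(i)} the plan is to \emph{lift congruences entrywise}: given a congruence $\rho$ on $S$, set $A\mathrel{\widehat{\rho}}B$ whenever $a_{ij}\mathrel{\rho}b_{ij}$ for all $i,j$. Checking that $\widehat{\rho}$ is a congruence on $T:=\mathbf{M}_n(S)$ is routine --- reflexivity, symmetry, transitivity and compatibility with $+$ are entrywise, and compatibility with $\cdot$ follows from $(AC)_{ij}=\sum_k a_{ik}c_{kj}$ together with the fact that $\rho$ respects both operations. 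Congruence-simplicity of $T$ then forces $\widehat{\rho}\in\{\mathrm{id}_T,\,T\times T\}$, and inspecting a single entry (compare two matrices differing only at the position $(1,1)$, respectively the constant matrices $\overline a$ and $\overline b$) pulls this back to $\rho\in\{\mathrm{id}_S,\,S\times S\}$.

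For \textup{(ii)} I would argue by contradiction, and \textbf{this construction is the step I expect to be the main obstacle}. Assume $S$ has a bi-absorbing element $o_S$, so that $\overline{o_S}$ is bi-absorbing in $T$ by Proposition~\ref{2.5}(ii). Since a congruence-simple semiring is bi-ideal-simple, it suffices to exhibit a bi-ideal $I$ of $T$ with $1<|I|<|T|$. The candidate is the set of matrices supported on the first column, $I=\set{A\in T}{a_{ij}=o_S \text{ for all } i \text{ and all } j\ge 2}$. Closure under $A\mapsto A+B$ and under left multiplication $A\mapsto BA$ is immediate from $o_S$ being additively, respectively multiplicatively, absorbing; the only genuine computation is closure under right multiplication: for $A\in I$, $B\in T$ and $j\ge 2$ one finds $(AB)_{ij}=a_{i1}b_{1j}+\sum_{k=2}^{n}o_S\,b_{kj}=a_{i1}b_{1j}+o_S=o_S$. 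The decisive point --- and the reason for using columns rather than rows --- is that the cross term $a_{i1}b_{1j}$ is swallowed precisely because $o_S$ is \emph{additively} absorbing. Finally $|I|=|S|^n\ge 2$ and $I\ne T$ (a matrix having an entry off the first column different from $o_S$ lies outside $I$, and such matrices exist since $|S|\ge 2$ and $n\ge 2$), which gives the desired contradiction.

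For \textup{(iii)} I would exclude the possibility $|SS|=1$. If $ab=z$ for all $a,b\in S$, then $(AB)_{ij}=\sum_{k=1}^{n}a_{ik}b_{kj}=nz$ for all $A,B\in T$, so $T\cdot T=\{\overline{nz}\}$ is a singleton. Consequently every equivalence relation on $T$ compatible with $+$ is automatically a semiring congruence (compatibility with $\cdot$ holds trivially, as all products in $T$ coincide). Applying this to the relation ``$A$ and $B$ agree in the $(1,1)$-entry'' yields a congruence on $T$ that is not $\mathrm{id}_T$ (by $n\ge 2$ and $|S|\ge 2$ there are distinct matrices agreeing at $(1,1)$) and not $T\times T$ (matrices with distinct $(1,1)$-entries are unrelated), contradicting congruence-simplicity; hence $|SS|\ge 2$.

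For \textup{(iv)}, by \textup{(i)} the semiring $S$ is congruence-simple, so the structure theorem \cite[Theorem~2.1]{simple} recalled in the introduction leaves four possibilities: $S$ is additively cancellative, additively idempotent, additively zeropotent (it has an additively absorbing $o$ with $x+x=o$ for all $x$), or its addition is trivial ($|S+S|=1$). The plan is to eliminate the last two using \textup{(ii)}: in the trivial-addition case $o$ is the unique value of $a+b$, and then $a+o\in S+S=\{o\}$ and $co=c(a+a)=ca+ca\in S+S=\{o\}$ show $o$ is bi-absorbing; in the zeropotent case $co+co=c(o+o)=co$ combined with $co+co=o$ gives $co=o$, and symmetrically $oc=o$, so again $o$ is bi-absorbing --- contradicting \textup{(ii)}. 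Hence $S$ is additively cancellative or additively idempotent, and $|S+S|\ge 2$ follows automatically: $S\sub S+S$ when $S$ is idempotent, while for cancellative $S$ the equality $|S+S|=1$ would force $a+c=b+c$ for all $a,b,c$, hence $|S|=1$, contrary to $|S|\ge 2$.
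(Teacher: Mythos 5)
Your proof is correct. Note that the paper does not actually prove this proposition --- it is quoted verbatim from \cite{simple} (Proposition 3.4) --- so there is no internal argument to compare against; what you have produced is a valid self-contained replacement. All four steps check out: the entrywise lift $\widehat{\rho}$ is indeed a congruence of $\mathbf{M}_n(S)$ and pulls back correctly via the constant matrices $\overline a,\overline b$ (which avoids any need for a zero element); the column set $I=\set{A}{a_{ij}=o_S \text{ for } j\ge 2}$ is a bi-ideal with $1<|I|=|S|^n<|T|$, the computation $a_{i1}b_{1j}+o_S=o_S$ being exactly the point, and this contradicts the bi-ideal-simplicity that the paper's preliminaries derive from congruence-simplicity; when $|SS|=1$ the product on $T$ is constant, so the ``equal $(1,1)$-entry'' relation is a third congruence; and the elimination of the zeropotent and trivial-addition cases from the classification of \cite{simple} (Theorem 2.1) by exhibiting a bi-absorbing element, hence contradicting (ii), is sound, as is the final deduction of $|S+S|\ge 2$. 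The only external inputs are Proposition \ref{2.5}(ii) (not actually needed, since you verify the bi-ideal directly) and the additive classification theorem from \cite{simple}, which the paper itself relies on, so this is an acceptable dependency.
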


\begin{theorem}\label{unity}
 \cite[Theorem 5.4]{simple} Let $S$ be an additively cancellative congruence-simple semiring.
 \begin{enumerate}
  \item[(i)]  If $S$ has an additively neutral elemeny then $S$ is a ring.
  \item[(ii)]  If $S$ has a unity, then $S$ is either a ring or $S$ is commutative.
 \end{enumerate}
\end{theorem}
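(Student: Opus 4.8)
The plan for (i) is to factor out the part of $S$ generated by additively invertible elements. Write $w$ for the additively neutral element. Since $S$ is additively cancellative, $w$ is in fact a zero: from $wa=(w+w)a=wa+wa$ we get $wa+wa=wa=wa+w$, whence $wa=w$ by cancellation, and dually $aw=w$; so write $0_S:=w$. Put $Z=\set{a\in S}{a+a'=0_S\text{ for some }a'\in S}$, the set of additively invertible elements. Then $Z$ is closed under $+$, and moreover $zc,cz\in Z$ whenever $z\in Z$, $c\in S$ (multiply a relation $z+z'=0_S$ by $c$ and use that $0_S$ is a zero). Define a relation $\sigma$ on $S$ by: $a\mathrel{\sigma}b$ if $a+z=b+z'$ for some $z,z'\in Z$. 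One checks directly that $\sigma$ is a congruence --- reflexivity uses $0_S\in Z$, transitivity uses closure of $Z$ under $+$, and compatibility with multiplication uses closure of $Z$ under multiplication by $S$.

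By congruence-simplicity, $\sigma=\mathrm{id}_S$ or $\sigma=S\times S$. If $\sigma=S\times S$, then $a\mathrel{\sigma}0_S$ for every $a$, which unpacks to $a\in Z$; hence $(S,+)$ is a group and $S$ is a ring, as claimed. So it remains to exclude the case $\sigma=\mathrm{id}_S$, i.e.\ $Z=\{0_S\}$ while $|S|\geq 2$ --- equivalently, $S$ is a zerosumfree additively cancellative semiring with a zero and at least two elements. Here cancellativity and zerosumfreeness make $\leq_S$ a partial order with least element $0_S$, so $S$ has no additively absorbing element, hence no singleton bi-ideal; combined with bi-ideal-simplicity (a consequence of congruence-simplicity), $S$ is bi-ideal-free. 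The goal is then to manufacture a proper non-trivial congruence. If $SS=\{0_S\}$ or $S$ has no zero divisors, the partition $\{0_S\}\cup(S\setminus\{0_S\})$ is one (it is compatible with multiplication precisely because all products of non-zero elements lie in a single class). In the remaining case --- $S$ has zero divisors and $SS\neq\{0_S\}$ --- one first shows, using the congruence $a\mathrel{\kappa}b\iff a+m=b+m'$ for some $m,m'$ in the two-sided annihilator of $S$, that this annihilator is trivial, and then combines this with Lemma \ref{6.0}(ii) (applicable since $|SS|\geq 2$, its case (i) being excluded by the zero $0_S$) and the one-sided annihilator ideals to produce the required congruence. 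Controlling this zerosumfree-with-zero-divisors case is the main obstacle in (i), and is where the deeper structure theory of congruence-simple semirings from \cite{simple} enters.

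For (ii) the plan is to reduce to (i) by showing that a non-commutative such $S$ has an additively neutral element, hence is a ring. If the additive subsemigroup $\langle 1_S\rangle=\set{n\cdot 1_S}{n\geq 1}$ is finite, then --- being finite, commutative and cancellative --- it is a group; if $e$ is its identity then $e+1_S=1_S$, so $ec+c=c$ for all $c\in S$, and since $ee=e$ (multiply $e+1_S=1_S$ on the left by $e$ and cancel) the choice $c=ec$ gives $ec+ec=ec$, so $ec$ is additively idempotent; hence $ec$ is the identity of the difference group of $(S,+)$, an element of $S$ that is additively neutral, and (i) applies. So assume $n\mapsto n\cdot 1_S$ is injective. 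Form the congruence $\gamma$ generated by all pairs $(ab,ba)$, $a,b\in S$; it is the least congruence with commutative quotient, so by simplicity either $\gamma=\mathrm{id}_S$ --- whence $S$ is commutative and we are done --- or $\gamma=S\times S$. In the latter case one must again show $S$ has an additively neutral element, so that (i) forces $S$ to be a ring; extracting such an element from the bare identity $\gamma=S\times S$ using only the unity and additive cancellativity is the principal difficulty of part (ii).
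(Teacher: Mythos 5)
The paper does not actually prove this theorem; it is imported verbatim as \cite[Theorem 5.4]{simple}, so there is no internal proof to compare against and your proposal must stand on its own. The preliminary reductions you make are correct: additive cancellativity does force an additively neutral element to be a zero; the relation $a\,\sigma\,b$ iff $a+z=b+z'$ with $z,z'$ additively invertible is a congruence whose collapse to $S\times S$ makes $(S,+)$ a group; and in (ii) the finite-$\langle 1_S\rangle$ case correctly yields an additively idempotent element, which in a cancellative semiring is automatically additively neutral (from $(a+e)+e=a+e$ cancel $e$), so (i) applies.

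The problem is that both parts stop exactly at their crux, and you acknowledge as much. In (i), after reducing to a zerosumfree, cancellative, congruence-simple semiring with zero, at least three elements and zero divisors, you only gesture at ``annihilator ideals plus Lemma \ref{6.0}(ii)'' and then defer to ``the deeper structure theory from \cite{simple}''; no proper nontrivial congruence is actually exhibited, and it is not evident that the sketched ingredients produce one. This case genuinely needs an input beyond elementary manipulation: the clean way to dispose of it is the archimedean property from Theorem \ref{bashir}(3) --- taking $b=0_S$ in $a\leq_S mb$ gives $a\leq_S 0_S$, so every element has an additive inverse and $S$ is a ring outright, making your entire case analysis after the congruence $\sigma$ unnecessary. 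Your write-up never invokes this (or any substitute for it). In (ii), the case where $n\mapsto n\cdot 1_S$ is injective and the commutator congruence $\gamma$ equals $S\times S$ is left entirely open: you correctly identify that it suffices to extract an additively neutral element, but you supply no mechanism for doing so, and nothing in the bare statement $\gamma=S\times S$ obviously yields one. So the proposal is an honest partial reduction with the two hardest steps missing, not a proof.
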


 Further, the following lemma is easy to verify.

\begin{lemma}\label{6.2}
Let $S$ be a semiring with a zero element $0_S$ and $n\geq 2$.
For $i_0,j_0\in\{1,\dots,n\}$ and $a\in S$ we denote by $E_{i_0j_0}(a)=(e_{ij})\in \mathbf{M}_n(S)$ a matrix such that $e_{i_0j_0}=a$ and $e_{i'j'}=0_S$ for all $(i',j')\neq(i_0,j_0)$.

The following holds for all $c,d\in S$, $i,j,k,\ell\in\{1,\dots,n\}$ and $A=(a_{ef})\in \mathbf{M}_n(S)$:
 \begin{enumerate}
 \item[(i)] $E_{ij}(c)+E_{ij}(d)=E_{ij}(c+d)$.
  \item[(ii)] $E_{ii}(c)\cdot E_{ij}(d)=E_{ij}(c\cdot d)$ and $E_{ij}(d)\cdot E_{jj}(c)=E_{ij}(d\cdot c)$.
  \item[(iii)]
  $E_{ik}(c)\cdot A\cdot E_{\ell j}(d)=E_{ij}(c\cdot a_{k \ell}\cdot d)$.
 \end{enumerate}
\end{lemma}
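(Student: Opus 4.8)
The plan is to check each of (i)--(iii) by a routine entrywise computation; the one point that requires (minimal) care is that $S$ carries no subtraction, so the ``off-diagonal'' summands must be eliminated using the two defining properties of $0_S$ --- multiplicative absorption, $0_S\cdot s=0_S=s\cdot 0_S$ for all $s\in S$, and additive neutrality, $s+0_S=s$ --- rather than by cancellation.

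Part (i) is immediate from the definition of matrix addition: the $(i,j)$-entry of $E_{ij}(c)+E_{ij}(d)$ is $c+d$, while every other entry is $0_S+0_S=0_S$.

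For the first identity in (ii), I would write $E_{ii}(c)\cdot E_{ij}(d)=(p_{ef})$ where $p_{ef}=\sum_{r=1}^{n}(E_{ii}(c))_{er}\,(E_{ij}(d))_{rf}$. The factor $(E_{ii}(c))_{er}$ equals $0_S$ unless $e=r=i$, so by multiplicative absorption all summands with $r\neq i$ (and every summand when $e\neq i$) are $0_S$, and by additive neutrality the sum collapses to the single term $r=i$, which is present only when $e=i$ and then equals $c\cdot(E_{ij}(d))_{if}$. This term is $c\cdot d$ when $f=j$ and $0_S$ otherwise, which says exactly that $E_{ii}(c)\cdot E_{ij}(d)=E_{ij}(c\cdot d)$. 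The second identity is proved the same way (or via the obvious left--right symmetry of the setup).

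Finally, (iii) follows from the same bookkeeping carried out twice. One computes that $E_{ik}(c)\cdot A$ has all rows equal to the zero row except row $i$, which is $(c\,a_{k1},\dots,c\,a_{kn})$; multiplying on the right by $E_{\ell j}(d)$ then retains only the $\ell$-th of these entries, multiplied by $d$ and placed in column $j$, so the product is $E_{ij}(c\cdot a_{k\ell}\cdot d)$ --- here associativity of $\cdot$ in $S$ is what makes $c\cdot a_{k\ell}\cdot d$ unambiguous. Alternatively, one can expand $A=\sum_{e,f}E_{ef}(a_{ef})$, use distributivity, and reduce to a slight generalization of (ii). I do not expect any genuine obstacle here; the only thing to watch is that absorption and neutrality of $0_S$ are invoked at each step where a term is discarded.
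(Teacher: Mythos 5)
Your computation is correct and is exactly the routine entrywise verification the paper has in mind --- indeed the paper omits the proof entirely, introducing the lemma only with ``the following lemma is easy to verify.'' Your care in using multiplicative absorption and additive neutrality of $0_S$ (rather than cancellation) to discard the extraneous summands is precisely the right point to flag in a semiring, and parts (i)--(iii) all check out.
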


Now we present our first result. 

\begin{theorem}\label{6.4}
 Let $S$ be a semiring and let for some $k\geq 1$ the semiring $\mathbf{M}_k(S)$ have a multiplicatively absorbing element. Then the following conditions are equivalent:
  \begin{enumerate}
  \item[(i)] $\mathbf{M}_n(S)$ is congruence-simple for every $n\geq 1$.
  \item[(ii)] $\mathbf{M}_n(S)$ is congruence-simple for at least one $n\geq 2$.
  \item[(iii)] $S$ is congruence-simple, $S$ contains no bi-absorbing element and $|SS|\geq 2$.
 \end{enumerate}
 If one of these conditions is fulfilled, then $S$ has a zero element and $S$ is either a ring or additively idempotent.
\end{theorem}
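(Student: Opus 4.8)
The plan is to prove the cycle $(i)\Rightarrow(ii)\Rightarrow(iii)\Rightarrow(i)$ and then to read off the last sentence. The implication $(i)\Rightarrow(ii)$ is trivial, and $(ii)\Rightarrow(iii)$ is exactly Proposition~\ref{6.0.1}(i)--(iii), so the whole content is in $(iii)\Rightarrow(i)$, which I would carry out in two stages --- first getting a zero in $S$, then bootstrapping to the matrix semiring.

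\emph{Stage A: $S$ has a zero.} If $k=1$, then $S$ itself has a multiplicatively absorbing element $w$. If $k\geq 2$, Proposition~\ref{2.6} gives such a $w$ in the bi-ideal $S_k$; since $(S_k\times S_k)\cup id_S$ is a congruence on the congruence-simple semiring $S$, it equals $id_S$ or $S\times S$, and the first option forces $S_k=\{w\}$, making $\{w\}$ a bi-ideal and $w$ bi-absorbing, against $(iii)$; hence $S_k=S$ and $w$ is multiplicatively absorbing in $S$. Now multiplicative absorption makes the bi-ideal generated by $w$ equal to $\{w\}\cup(w+S)$, which by bi-ideal-simplicity (a consequence of congruence-simplicity) and the absence of a bi-absorbing element must be all of $S$. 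Then $w(w+w)=w$ by absorption while $w(w+w)=w+w$ by distributivity, so $w+w=w$; feeding this back into $S=\{w\}\cup(w+S)$ gives $w+a=a$ for every $a\in S$, so $w$ is additively neutral and $w=0_S$ is a zero. From here Lemma~\ref{6.2} is available.

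\emph{Stage B: $\mathbf{M}_n(S)$ is congruence-simple for every $n\geq 2$} (the case $n=1$ being $(iii)$). Let $\rho\neq id_T$ be a congruence on $T=\mathbf{M}_n(S)$ and pick $(A,B)\in\rho$ with $a_{pq}\neq b_{pq}$. For each $r\in\{1,\dots,n\}$ the relation $\sigma_r:=\{(x,y):(E_{rr}(x),E_{rr}(y))\in\rho\}$ is a congruence on $S$ (by Lemma~\ref{6.2}(i),(ii)), and applying $X\mapsto E_{rp}(c)\,X\,E_{qr}(d)$ to $(A,B)$ and simplifying by Lemma~\ref{6.2}(iii) places $(c\,a_{pq}\,d,\ c\,b_{pq}\,d)$ in $\sigma_r$ for all $c,d\in S$. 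Since $S$ has a multiplicatively absorbing element it is not of the exceptional type in Lemma~\ref{6.0}(i), so Lemma~\ref{6.0}(ii) supplies $c,d$ with $c\,a_{pq}\,d\neq c\,b_{pq}\,d$; hence $\sigma_r\neq id_S$, and so $\sigma_r=S\times S$. Therefore $(E_{rr}(x),0_T)\in\rho$ for all $r$ and all $x\in S$. Left-multiplying these pairs by arbitrary matrices of $T$ and adding finitely many of the results, I obtain $(M,0_T)\in\rho$ for every matrix $M$ whose entries all lie in the set $U$ of finite sums of elements of $SS$; this $U$ contains $0_S$, is closed under addition and under left and right multiplication by $S$, and has $|U|\geq|SS|\geq 2$. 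The Bourne-type relation $\rho_U:=\{(a,b):a+u=b+u'\text{ for some }u,u'\in U\}$ is a congruence on $S$ relating $0_S$ to every element of $SS$, so $\rho_U\neq id_S$ and hence $\rho_U=S\times S$. Consequently each entry $a_{ij}$ admits $u_{ij},u'_{ij}\in U$ with $a_{ij}+u_{ij}=u'_{ij}$; putting $U^{(1)}=(u_{ij})$, $U^{(2)}=(u'_{ij})$ we get $(U^{(1)},0_T)\in\rho$, $(U^{(2)},0_T)\in\rho$ and $A+U^{(1)}=U^{(2)}$, so $(A,0_T)\in\rho$. Thus $\rho=T\times T$, $T$ is congruence-simple, and $(i)$ follows.

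For the last assertion, knowing now that $\mathbf{M}_n(S)$ is congruence-simple, Proposition~\ref{6.0.1}(iv) gives that $S$ is additively idempotent or additively cancellative; in the cancellative case $0_S$ is an additively neutral element, so $S$ is a ring by Theorem~\ref{unity}(i). The step I expect to be the genuine obstacle is the tail of Stage~B: without a unity in $S$ one cannot transport $(E_{11}(x),0_T)\in\rho$ onto the off-diagonal matrix units by conjugation, and $SS=S$ may fail, so some substitute is needed --- recognizing that the Bourne congruence built from the additive closure of $SS$ plays exactly that role is the heart of the argument.
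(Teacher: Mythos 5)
Your proof is correct, and Stage~A (extracting a multiplicatively absorbing element from $S_k$, forcing $S_k=S$, and then showing $w+w=w$ and $w+a=a$ via the bi-ideal $w+S$) is essentially identical to the paper's argument. Stage~B, however, genuinely diverges. The paper defines a \emph{single} relation $\sigma$ on $S$ by $(a,b)\in\sigma$ iff $(E_{ij}(a),E_{ij}(b))\in\varrho$ for \emph{all} positions $(i,j)$ simultaneously; since Lemma~\ref{6.2}(iii) produces the pair $(E_{ij}(cad),E_{ij}(cbd))\in\varrho$ at every position at once, $\sigma=S\times S$ immediately yields $(E_{ij}(c_{ij}),E_{ij}(d_{ij}))\in\varrho$ for arbitrary entries at arbitrary positions, and summing reassembles any pair of matrices. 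This quantifier-over-all-positions definition is exactly what dissolves the obstacle you correctly identified at the end --- there is no need to ``transport'' diagonal units to off-diagonal ones, because off-diagonal positions were built into $\sigma$ from the start. Your route, with the diagonal-only congruences $\sigma_r$, then has to recover arbitrary matrices by a second pass: multiplying the pairs $(E_{rr}(x),0_T)$ into matrices with entries in the additive closure $U$ of $SS$, and invoking the Bourne congruence $\rho_U$ (which is indeed a congruence, is nontrivial because $|SS|\geq 2$, and hence equals $S\times S$) to write every entry of an arbitrary matrix as a difference of elements of $U$. All of these steps check out, so your argument is a valid, if longer, alternative; what the paper's choice of $\sigma$ buys is precisely the elimination of that entire detour.
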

\begin{proof}
 (i)$\Rightarrow$(ii): Obvious.

  (ii)$\Rightarrow$(iii): Use Proposition \ref{6.0.1}.

 (iii)$\Rightarrow$(i): First, we show that $|S+S|\geq 2$. Assume, for contrary, that $S+S=\{u\}$ for some $u\in S$. Then for every $a\in S$ it holds that $au=a(u+u)=au+au=u$ and, similarly, $ua=u$. Hence $u$ is a bi-absorbing element, a contradiction.
 
 Therefore, $|S+S|\geq 2$. Now, since the semiring $S$ is congruence-simple, $S$ is also bi-ideal-simple. As the set $S+S$ is a non-trivial bi-ideal of $S$, it follows that $S+S=S$. By Proposition \ref{2.6}, the semiring $S$ has a multiplicatively absorbing element $w\in S$.

 We show that $w$ is additively neutral. As $w$ is multiplicatively absorbing, we have $w=w(w+w)=w^2+w^2=w+w$. Clearly, $S+w$ is a bi-ideal of $S$ and $w=w+w\in S+w$. Since $S$ is congruence-simple, $S$ is also bi-ideal-simple. Hence we have either $S+w=S$ or $S+w=\{w\}$. As the latter case means that $w$ is bi-absorbing, it follows, by Proposition \ref{6.0.1}, that $S+w=S$ holds. Thus for every $a\in S$ there is $b\in S$ such that $a=b+w$. It follows that $a+w=b+w+w=b+w=a$ for every $a\in S$ and therefore $w=0_S$ is a zero. By Proposition \ref{6.0.1}, $S$ is either additively cancellative (and thus a ring, by Theorem \ref{unity}) or additively idempotent.

 Now let $\varrho$ be a congruence of $T=\mathbf{M}_n(S)$. Define a relation $\sigma$ on $S$ by $(a,b)\in\sigma$ iff $(E_{ij}(a),E_{ij}(b))\in\varrho$ for all pairs $(i,j)$. It easily follows,  by Lemma \ref{6.2}(i) and (ii), that $\sigma$ is a semiring congruence on $S$.

Assume now that $\varrho\neq id_T$, i.e., there are $A,B\in T$ such that $(A,B)\in\varrho$ and  $a=a_{k\ell}\neq b_{k\ell}=b$ for some pair $(k,\ell)$. By Lemma \ref{6.0}, there are $c,d\in S$ such that $cad\neq cbd$. Now, by Lemma \ref{6.2} (iii), we have that
$$\big(E_{ij}(cad),E_{ij}(cbd)\big)=\big(E_{ik}(c)\cdot A\cdot E_{\ell j}(d),\ E_{ik}(c)\cdot B\cdot E_{\ell j}(d)\big)\in\varrho$$
for all pairs $(i,j)$. Hence $(cad,cbd)\in\sigma$ and therefore $\sigma\neq id_{S}$. Since $S$ is congruence-simple, we obtain that $\sigma=S\times S$. Therefore for all $C=(c_{ij})$ and $D=(d_{ij})$ in $T$ it follows that
$(C,D)=\left(\sum_{i,j}E_{ij}(c_{ij}),\sum_{i,j}E_{ij}(d_{ij})\right)\in\varrho$
and we obtain that  $\varrho=T\times T$. We conclude that $T$ is congruence-simple.
\end{proof}

In the literature and applications, semirings with a zero (and, possibly, a unity) appear often. 
In such a case the characterization of congruence-simple matrix semirings is fully answered by Theorem \ref{6.4}. On the other hand, also semirings without such special elements (e.g., the unbounded distributive lattices or the subsemirings of positive reals) are important on themselves. 

\begin{remark}
 A congruence-simple semiring $S$ with a zero element and such that $|SS|\geq 2$ does not need to possess a unity. And, henceforth, Theorem \ref{6.4} is a proper generalization of \cite[Proposition 5.5]{KNZ-2}, where both the zero and the unity are assumed.
 
 (i) For the additively cancellative case (the ring case, in fact) there exist  non-commutative simple rings without zero-divisors and without a unity (see, e.g., \cite{cohn1,cohn2}).
 
 (ii) For the additively idempotent case consider the following. Let $L$ be a semilattice with the least element $\omega\in L$, the greatest element $\Omega\in L$ and let $|L|\geq 3$. Let $\mathrm{End}_{\omega}(L)$ be the semiring of all endomorphisms of $L$ that preserve $\omega$. Now the semiring $S=\set{\varphi\in \mathrm{End}_{\omega}(L)}{\varphi(L)\ \emph{is a finite set}}$ is a subsemiring of  $\mathrm{End}_{\omega}(L)$ with a zero element. By \cite[Corollary 2.7]{simple-zero}, the semiring $S$ is congruence-simple and it is easy to show that $S$ contains zero-divisors.
 
 If $L$ is infinite, then $S$ does not contain a unity and if $L$ is finite then $S=\mathrm{End}_{\omega}(L)$ has a unity, but there are non-zero elements without (multiplicative) inverses. 
 
\end{remark}

 By Proposition \ref{6.0.1},  the case of congruence-simple matrix semirings  splits into two disjoint and essentially different classes -- one of them being the  additively cancellative semirings (i.e., embeddable into rings) and the other one is the class of additively  idempotent semirings (appearing, e.g., in logics, partially ordered structures or tropical algebra).

The rest of the paper is devoted to the first class mentioned  -- the additively cancellative case. The latter class will be studied  in a follow-up article.

\section{Bi-ideal-free matrix semirings}

As we already observed, every congruence-simple semiring is bi-ideal-simple, and in fact, properties of congru\-ence-simple semirings can be often derived using their bi-ideal-simplicity. In particular, this property plays a key role in the study of congruence-simple semirings that are additively cancellative (see Section \ref{add_cancell}).   In the present section we study the interplay between the semiring $S$ and its matrix semiring $\mathbf{M}_n(S)$ from the viewpoint of their bi-ideal-simplicity (and bi-ideal-freeness).

Through this section let $S$ be a semiring.

\begin{lemma}\label{5.1}
 Let $I$ be an bi-ideal of $\mathbf{M}_n(S)$. Then:
  \begin{enumerate}
  \item[(i)] $\overline{a}\in I$ for at least one $a\in S$.
  \item[(ii)] If $a\in S$ is such that $\overline{a}\in I$ and $a+S=S$, then $I=\mathbf{M}_n(S)$.
 \end{enumerate}
\end{lemma}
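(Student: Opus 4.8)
The plan is to treat (i) and (ii) separately, in both cases by elementary direct computations, exploiting that a bi-ideal $I$ of $\mathbf{M}_n(S)$ is closed under two-sided multiplication by \emph{arbitrary} matrices and under addition of \emph{arbitrary} matrices. Since $S$ need not have a zero element, the matrix units $E_{ij}(\cdot)$ of Lemma~\ref{6.2} are not available, so the role usually played by them will here be played by the constant matrices $\overline{c}$.

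For (i): the set $I$ is non-empty, so pick $B=(b_{ij})\in I$, and fix arbitrary $c,d\in S$. I will compute $\overline{c}\,B\,\overline{d}$ and show it has constant form. First, $(\overline{c}\,B)_{ij}=\sum_{k}c\,b_{kj}=c\big(\sum_{k}b_{kj}\big)$, so $\overline{c}\,B$ has constant columns; multiplying on the right by $\overline{d}$ then gives $(\overline{c}\,B\,\overline{d})_{ij}=\sum_{\ell}c\big(\sum_{k}b_{k\ell}\big)d=c\,\sigma\,d$, where $\sigma=\sum_{k,\ell}b_{k\ell}$ does not depend on the pair $(i,j)$. Hence $\overline{c}\,B\,\overline{d}=\overline{c\sigma d}$, and this matrix lies in $I$ because $I$ is a bi-ideal; so $a:=c\sigma d$ has the required property $\overline{a}\in I$.

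For (ii): let $D=(d_{ij})\in\mathbf{M}_n(S)$ be arbitrary. Since $a+S=S$, for each pair $(i,j)$ there is $t_{ij}\in S$ with $a+t_{ij}=d_{ij}$. Then $D=\overline{a}+(t_{ij})$ with $(t_{ij})\in\mathbf{M}_n(S)$, and since $\overline{a}\in I$ and $I$ is closed under adding arbitrary matrices, $D\in I$. As $D$ was arbitrary, $I=\mathbf{M}_n(S)$.

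There is no genuine obstacle here; the one point worth flagging is the observation in part (i) that sandwiching any matrix of $I$ between two constant matrices collapses it to a single constant matrix — this is what substitutes for the usual matrix-unit manipulations in the absence of a zero element. Part (ii) is then immediate from the defining closure properties of a bi-ideal.
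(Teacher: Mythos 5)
Your proof is correct. Part (ii) is exactly the paper's argument: write each entry $d_{ij}=a+t_{ij}$ and add $(t_{ij})$ to $\overline{a}$. Part (i), however, takes a genuinely different route. The paper uses only the \emph{additive} closure of $I$: starting from $A=(a_{ij})\in I$ it sets $a=\sum_{i,j}a_{ij}$ and $b_{ij}=\sum_{(k,\ell)\neq(i,j)}a_{k\ell}$, so that $A+B=\overline{a}\in I$; note this forces a separate (trivial) treatment of $n=1$, since the sums $b_{ij}$ are empty there. You instead use the \emph{multiplicative} closure, sandwiching $B\in I$ between constant matrices to get $\overline{c}\,B\,\overline{d}=\overline{c\sigma d}\in I$ with $\sigma=\sum_{k,\ell}b_{k\ell}$. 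Your computation is right and works uniformly for all $n\geq 1$; it even gives the slightly stronger conclusion that $\overline{c\sigma d}\in I$ for \emph{all} $c,d\in S$, which is in the spirit of the sets $Q_a=T\overline{a}T+T$ studied later. The paper's version has the mild advantage of producing $\overline{a}$ for $a$ the sum of the entries of an arbitrary element of $I$, without needing to choose auxiliary elements $c,d$, but for the purposes of the lemma either output suffices.
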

\begin{proof}
 (i) Assume $n\geq 2$, take $A=(a_{ij})\in I$ and put $a=\sum_{i,j} a_{ij}$ and $b_{ij}=\sum_{(k,\ell)\neq(i,j)} a_{k\ell}$. Then $A+B=\overline{a}$, and hence $\overline{a}\in I$.

 (ii) If $A=(a_{ij})\in I$, then $a+b_{ij}=a_{ij}$ for some $b_{ij}\in S$. Now, $\overline{a}+B=A$, and hence $A\in I$.
\end{proof}

\begin{lemma}\label{5.3}
 Let  $a\in S$. Then:
   \begin{enumerate}
  \item[(i)] For all $C,D\in \mathbf{M}_n(S)$ there are $c_1,\dots,c_n,d_1,\dots,d_n\in S$ such that $(C\overline{a}D)_{ij}=c_iad_{j}$ for all $i,j$.
  \item[(ii)] Let $c_1,\dots,c_n,d_1,\dots,d_n\in S$ and $F=(c_iad_j)\in \mathbf{M}_n(S)$. Then for every $B\in \mathbf{M}_n(S)$ there are $c'_1,\dots,c'_n,d'_1,\dots,d'_n\in S$ such that $(BF)_{ij}=c'_iad_{j}$ and  $(FB)_{ij}=c_iad'_{j}$ for all $i,j$.
  \item[(iii)] If, moreover $S+S=S$, then for all $c_1,\dots,c_n,d_1,\dots,d_n\in S$ there are $C,D\in \mathbf{M}_n(S)$ such that $(C\overline{a}D)_{ij}=c_iad_{j}$ for all $i,j$.
 \end{enumerate}
\end{lemma}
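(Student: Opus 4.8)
The plan is to prove the three parts of Lemma \ref{5.3} by direct matrix computation, exploiting the very special ``rank-one'' shape of the matrix $\overline{a}$ (all entries equal) and of the matrices $F=(c_ia d_j)$.

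For part (i), I would simply compute $C\overline{a}D$ entrywise. Writing $C=(c_{ij})$, $D=(d_{ij})$ and using that every entry of $\overline{a}$ equals $a$, we get
\[
(C\overline{a}D)_{ij}=\sum_{k,\ell} c_{ik}\, a\, d_{\ell j}.
\]
The point is that $a$ sits in the middle with no index, so by the (two-sided) distributive law this sum factors as $\bigl(\sum_k c_{ik}\bigr)\,a\,\bigl(\sum_\ell d_{\ell j}\bigr)$; hence setting $c_i=\sum_k c_{ik}$ and $d_j=\sum_\ell d_{\ell j}$ does the job. One minor care point: since $S$ need not have a zero, the empty-looking sums are genuine finite sums over $k,\ell\in\{1,\dots,n\}$, all of which are present, so no additive identity is needed.

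For part (ii), I would again compute entrywise. For $B=(b_{ij})$ and $F=(c_ia d_j)$,
\[
(BF)_{ij}=\sum_k b_{ik}\,(c_k a d_j)=\Bigl(\sum_k b_{ik} c_k\Bigr)\,a\,d_j,
\]
so $c'_i:=\sum_k b_{ik}c_k$ works while the $d_j$ are left unchanged; the computation for $FB$ is symmetric, giving $d'_j=\sum_k d_k b_{kj}$ and leaving the $c_i$ unchanged. This is the statement that the set of matrices of the form $(c_iad_j)$ is closed under left and right multiplication by arbitrary matrices in $\mathbf M_n(S)$, which is exactly what will be needed to show such matrices generate small bi-ideals.

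For part (iii), given target data $c_1,\dots,c_n,d_1,\dots,d_n$, I want to realize $(c_iad_j)$ as $C\overline{a}D$. Guided by part (i), the natural choice is to take $C$ with $i$-th row constant equal to some element summing appropriately and $D$ with $j$-th column likewise; concretely, if $S+S=S$ then in particular each $c_i$ and each $d_j$ can be written as a sum of $n$ elements of $S$ (iterate $S+S=S$, or just pad with a single term when $n=1$), say $c_i=\sum_{k=1}^n c_{ik}$ and $d_j=\sum_{\ell=1}^n d_{\ell j}$, and then $C=(c_{ik})$, $D=(d_{\ell j})$ give $(C\overline{a}D)_{ij}=c_iad_j$ by the computation of part (i). The only real obstacle here is the bookkeeping to guarantee that an arbitrary element of $S$ is a sum of exactly $n$ summands; for $n\ge 2$ this follows by repeatedly applying $S+S=S$, and for $n=1$ there is nothing to prove since $\overline{a}=(a)$ and one takes $C=(c_1)$, $D=(d_1)$. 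I expect part (iii) to be the fiddliest, but it is still routine; the essential idea in all three parts is that the uniform matrix $\overline{a}$ behaves like a rank-one object and multiplication on either side only rescales the ``row'' and ``column'' vectors.
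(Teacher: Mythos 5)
Your proof is correct and follows essentially the same route as the paper: all three parts are done by the same entrywise computation, factoring $\sum_{k,\ell}c_{ik}ad_{\ell j}$ as $\bigl(\sum_k c_{ik}\bigr)a\bigl(\sum_\ell d_{\ell j}\bigr)$, and part (iii) by decomposing each $c_i$ and $d_j$ into $n$ summands via iterated use of $S+S=S$. Your explicit remark about needing exactly $n$ summands (and the $n=1$ case) is a small point the paper leaves implicit, but the argument is the same.
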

\begin{proof}
(i) Let $C=(c_{ij}), D=(d_{ij})\in \mathbf{M}_n(S)$. Then $(C\overline{a}D)_{ij}=\sum_{k,\ell} c_{ik}ad_{\ell j}=(\sum_{k} c_{ik})a(\sum_{\ell}d_{\ell j})=c_i ad_j$ for $c_i=\sum_{k} c_{ik}$ and $d_j=\sum_{\ell}d_{\ell j}$.

(ii) and (iii). Let $c_1,\dots,c_n,d_1,\dots,d_n\in S$ and $F=(c_iad_j)\in \mathbf{M}_n(S)$. For $B=(b_{ij})\in \mathbf{M}_n(S)$ set $c'_i=\sum_{k} b_{ik}c_k$ and  $d'_j=\sum_{\ell}d_{\ell}b_{\ell j}$. Then  $(BF)_{ij}= \sum_{k}b_{ik}c_kad_j=( \sum_{k}b_{ik}c_k)ad_j=c'_iad_{j}$ and $(FB)_{ij}=\sum_{\ell}c_iad_\ell b_{\ell j}=c_ia(\sum_{\ell}d_\ell b_{\ell j}) =c_iad'_{j}$. 

Assume, moreover, that $S+S=S$. Then for every $c_i\in S$ there are $c_{ij}\in S$ such that $c_{i}=\sum_{j} c_{ij}$. Similarly, for every $d_j\in S$ there are $d_{ij}\in S$ such that $d_{j}=\sum_{i} d_{ij}$. Now, for $C=(c_{ij})$ and $D=(d_{ij})$ it holds that  $(C\overline{a}D)_{ij}=\sum_{k,\ell} c_{ik}ad_{\ell j}=(\sum_{k} c_{ik})a(\sum_{\ell}d_{\ell j})=c_i ad_j$.
\end{proof}

\bigskip

Let us now introduce and study the properties of certain prominent bi-ideals of matrix semirings.
For that, let $T=\mathbf{M}_n(S)$ and $a\in S$. 
 We set $Q_a=T\cdot \overline{a}\cdot T+T$ and
 $$P_a=\set{(a_{ij})\in T}{\exists\ c_{1},\dots,c_{n},d_{1},\dots,d_{n}\in S\ \ \forall i,j \ \  c_ia d_j\leq_S a_{ij} }$$
  By  $R_a$ we denote the bi-ideal of $T$ generated by $\overline{a}\in T$.

 \begin{lemma}\label{bi-ideals}
 Let $a\in S$. Then $Q_a\sub P_a$ and $Q_a\sub R_a$ and the sets $Q_a$ and $P_a$ are bi-ideals  of $T=\mathbf{M}_n(S)$.
 If moreover $S+S=S$, then $Q_a=P_a$.
 \end{lemma}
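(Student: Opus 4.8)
The plan is to prove the four assertions in the natural order: first that $Q_a$ and $P_a$ are bi-ideals, then the two inclusions $Q_a \subseteq P_a$ and $Q_a \subseteq R_a$, and finally the equality $Q_a = P_a$ under the extra hypothesis $S+S=S$. For $Q_a = T\overline a T + T$ being a bi-ideal, I would note that $T\overline a T$ is closed under left and right multiplication by $T$ (since $T$ is a semigroup under multiplication) and adding the summand $T$ makes the set closed under addition as well as absorb $T$ additively; closure of $Q_a$ under multiplication by $T$ then follows from distributivity together with $T\cdot T \subseteq T$. For $P_a$, the key observation is Lemma \ref{5.3}(ii): if $F=(c_iad_j)$ and $B\in T$, then $BF$ and $FB$ again have entries of the form $c_i'ad_j$ resp.\ $c_iad_j'$; combined with the fact that $\leq_S$ is stable under the semiring operations (stated in the Preliminaries) and transitive, this gives that $P_a$ is closed under left and right multiplication by arbitrary elements of $T$, and closure under addition and under adding arbitrary elements of $T$ is immediate from stability of $\leq_S$ under $+$ (adding componentwise preserves the inequalities $c_iad_j \leq_S a_{ij}$).

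For the inclusion $Q_a \subseteq R_a$: $R_a$ is by definition the bi-ideal generated by $\overline a$, so it contains $\overline a$ and hence (being closed under multiplication by $T$ on both sides and under addition) contains $T\overline a T$; and since a bi-ideal absorbs $+$ it must contain $I + T$ for any $I \subseteq R_a$ nonempty, in particular $T\overline a T + T \subseteq R_a$. For $Q_a \subseteq P_a$: a general element of $Q_a$ has the form $\sum_t C_t \overline a D_t + B$ with $C_t, D_t, B \in T$; by Lemma \ref{5.3}(i) each $C_t\overline a D_t$ has entries $c_i^{(t)} a d_j^{(t)}$, and I would need to check that a \emph{sum} of such matrices, plus $B$, still lies in $P_a$ — here one picks a single witness, e.g.\ $c_i := c_i^{(1)}$ and $d_j := d_j^{(1)}$ from the first summand (or more carefully argues that $c_i^{(1)} a d_j^{(1)} \leq_S (C_1\overline a D_1)_{ij} \leq_S (\text{full sum})_{ij}$ using transitivity of $\leq_S$ and that each additional summand only makes the entry larger in $\leq_S$).

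For the final equality $Q_a = P_a$ when $S+S=S$: it remains to show $P_a \subseteq Q_a$. Take $(a_{ij}) \in P_a$ with witnesses $c_i, d_j$ so that $c_i a d_j \leq_S a_{ij}$. By Lemma \ref{5.3}(iii), the hypothesis $S+S=S$ gives matrices $C, D \in T$ with $(C\overline a D)_{ij} = c_i a d_j$. Then for each $(i,j)$ there is $b_{ij} \in S$ with $c_i a d_j + b_{ij} = a_{ij}$ (or $c_i a d_j = a_{ij}$, in which case take $b_{ij}$ so that $a_{ij} = c_iad_j + b_{ij}$ using $S+S=S$ again, or handle that entry separately), so with $B = (b_{ij}) \in T$ we get $(a_{ij}) = C\overline a D + B \in T\overline a T + T = Q_a$. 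The main obstacle I anticipate is the bookkeeping around the ``$a = b$ or $\exists c$'' disjunction built into the definition of $\leq_S$ — i.e.\ making sure that entries where $c_iad_j = a_{ij}$ exactly (rather than via a nontrivial $+c$) are still captured by the summand $B$; the hypothesis $S+S=S$ is precisely what lets one always write $a_{ij}$ as $c_iad_j + b_{ij}$ and thereby absorb everything into the "$+T$" part of $Q_a$, so this should be routine once stated carefully.
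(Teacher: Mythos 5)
Most of your argument matches the paper's: $Q_a\sub P_a$ via Lemma \ref{5.3}(i), the bi-ideal property of $P_a$ via Lemma \ref{5.3}(ii) together with the stability of $\leq_S$ under the operations, and the (easy, essentially omitted in the paper) verifications that $Q_a$ is a bi-ideal and that $Q_a\sub R_a$. One small reading point: by the paper's convention for products of sets, $T\overline{a}T+T$ consists of single expressions $C\overline{a}D+B$, not finite sums $\sum_t C_t\overline{a}D_t+B$; your argument covers the larger set anyway, so nothing breaks there.

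The genuine gap is in the final step, $P_a\sub Q_a$ under $S+S=S$. You need $a_{ij}=c_iad_j+b_{ij}$ with $b_{ij}\in S$ for \emph{every} entry, but the definition of $\leq_S$ only yields such a $b_{ij}$ when the inequality $c_iad_j\leq_S a_{ij}$ is witnessed by an actual summand; in the case $c_iad_j=a_{ij}$ you claim that ``$S+S=S$'' produces $b_{ij}$ with $a_{ij}=c_iad_j+b_{ij}$, which it does not: $S+S=S$ only says $a_{ij}$ is a sum of \emph{some} two elements of $S$, not a sum whose first summand is the specific element $c_iad_j$. Nor can you ``handle that entry separately'': membership in $Q_a=T\overline{a}T+T$ requires a single matrix $B\in T$ all of whose entries lie in $S$ (there is no zero available in general), so the decomposition must work uniformly across all $n^2$ entries. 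The paper's fix is to apply $S+S=S$ to the \emph{witnesses} rather than to $a_{ij}$: write $c_i=e_i+f_i$, so that $a_{ij}\geq_S c_iad_j=e_iad_j+f_iad_j$; now regardless of whether $c_iad_j$ equals $a_{ij}$ or not, one gets $a_{ij}=e_iad_j+m_{ij}$ with $m_{ij}\in S$ (namely $m_{ij}=f_iad_j$ or $m_{ij}=f_iad_j+x$), and Lemma \ref{5.3}(iii) applied to the $e_i$'s then gives $A=E\overline{a}D+M\in Q_a$. With this substitution your proof is complete.
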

 \begin{proof}
 By Lemma \ref{5.3}(i), $Q_a\sub P_a$. Clearly, $P_a+ T\sub P_a$ and, by Lemma \ref{5.3}(ii), $TP_a\sub P_a$ and $P_aT\sub P_a$. Hence $P_a$ is a bi-ideal of $T$.
 
 Assume that $S+S=S$ and let $A=(a_{ij})\in P_a$. Then there are $c_{1},\dots,c_{n},$ $d_{1},\dots,d_{n}\in S$ such that $c_ia d_j\leq_S a_{ij}$ for all $i,j$. As $S+S=S$, there are $e_1,\dots,e_n,f_1,\dots,f_n\in S$ such that $c_i=e_i+f_i$ for every $i$. Hence $a_{ij}\geq_Sc_ia d_j= e_ia d_j+f_ia d_j$ and, therefore, there is $M=(m_{ij})\in T$ such that $a_{ij}= e_ia d_j+m_{ij}$ for all $i,j$. Now, by Lemma \ref{5.3}(iii), there are $E,D\in T$ such that $A=E\overline{a}D+M$. It follows that $A\in Q_a$ and we obtain that $Q_a=P_a$.
 \end{proof}

\begin{lemma}\label{5.5}
 Let $a\in S$. Then $|Q_a|=1$ if and only if $S$ has a bi-absorbing element $o_S$ and $S_{n}a S_{n}+S=\{o_S\}$ (see Proposition $\ref{2.6}$).
\end{lemma}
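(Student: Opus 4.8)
The plan is to prove both implications at once by pinning down exactly which entries occur in the matrices belonging to $Q_a=T\cdot\overline a\cdot T+T$, where $T=\mathbf{M}_n(S)$. First I would invoke Lemma \ref{5.3}(i): every $C\overline a D$ has $(i,j)$-entry $c_ia d_j$ with $c_i=\sum_k c_{ik}\in S_n$ and $d_j=\sum_\ell d_{\ell j}\in S_n$, so after adding an arbitrary $E\in T$ one sees that every element of $Q_a$ has $(i,j)$-entry of the form $c_ia d_j+e_{ij}$ with $c_i,d_j\in S_n$ and $e_{ij}\in S$; in particular every entry of every element of $Q_a$ lies in $S_na S_n+S$. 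Conversely, for given $s,t\in S_n$ and $u\in S$, writing $s=\sum_k s_k$ and $t=\sum_\ell t_\ell$ with $s_k,t_\ell\in S$ and taking $C=(c_{ij})$, $D=(d_{ij})$, $E=(e_{ij})$ with $c_{ij}=s_j$, $d_{ij}=t_i$, $e_{ij}=u$, the row sums of $C$ equal $s$ and the column sums of $D$ equal $t$, so by Lemma \ref{5.3}(i) the matrix $C\overline a D+E$ is the constant matrix $\overline{sat+u}$, which therefore lies in $Q_a$. This yields the two inclusions
\[
\{\overline x\,:\,x\in S_na S_n+S\}\ \sub\ Q_a\ \sub\ \{\,A\in T\ :\ a_{ij}\in S_na S_n+S\ \text{ for all }i,j\,\},
\]
which is the heart of the argument.

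From here both directions are short. If $S$ has a bi-absorbing element $o_S$ and $S_na S_n+S=\{o_S\}$, then the right-hand inclusion forces every entry of every element of $Q_a$ to be $o_S$, so $Q_a\sub\{\overline{o_S}\}$, and since $Q_a\neq\emptyset$ we get $|Q_a|=1$. For the converse, if $|Q_a|=1$ then, because $Q_a$ is a bi-ideal of $T$ by Lemma \ref{bi-ideals}, it is a one-element bi-ideal and hence equals $\{o_T\}$ for a bi-absorbing element $o_T$ of $T$ (recall that $\{w\}$ is a bi-ideal precisely when $w$ is bi-absorbing); Proposition \ref{2.5}(ii) then provides a bi-absorbing element $o_S\in S$ with $o_T=\overline{o_S}$. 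Finally, the left-hand inclusion applied with $Q_a=\{\overline{o_S}\}$ shows that $\overline{sat+u}=\overline{o_S}$, i.e. $sat+u=o_S$, for all $s,t\in S_n$ and $u\in S$; as $S_na S_n+S$ is non-empty, this gives $S_na S_n+S=\{o_S\}$.

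I do not anticipate a genuine obstacle. The only step that must be written with some care is the explicit construction of $C$, $D$, $E$ realizing a prescribed value $sat+u$ as every entry of a matrix in $Q_a$ — in particular verifying $\sum_k c_{ik}=s$ and $\sum_\ell d_{\ell j}=t$ using that $s,t\in S_n$. Everything else is an immediate appeal to Lemma \ref{5.3}(i), to Lemma \ref{bi-ideals} (that $Q_a$ is a bi-ideal of $T$), to the elementary characterization of one-element bi-ideals via bi-absorbing elements, and to Proposition \ref{2.5}(ii) to transport the bi-absorbing element between $\mathbf{M}_n(S)$ and $S$.
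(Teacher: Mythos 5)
Your proof is correct and follows essentially the same route as the paper: the forward direction uses that a one-element bi-ideal consists of a bi-absorbing element together with Proposition \ref{2.5}(ii) and the entry formula $(C\overline{a}D+E)_{ij}=\bigl(\sum_k c_{ik}\bigr)a\bigl(\sum_\ell d_{\ell j}\bigr)+e_{ij}$, and the converse (which the paper dismisses as easy) follows from the same entry computation. Your explicit construction of $C,D,E$ realizing any prescribed $sat+u$ with $s,t\in S_n$, $u\in S$ as a constant matrix in $Q_a$ is a welcome elaboration of the step the paper leaves implicit in its ``Consequently, $S_{n}a S_{n}+S=\{o_S\}$.''
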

\begin{proof}
Assume first that $Q_a=\{W\}$ for some $W=(w_{ij})\in T =\mathbf{M}_n(S)$. Since $Q_a$ is a bi-ideal in $T$ of cardinality one, the semiring $T$ has a bi-absorbing element. By Proposition \ref{2.5}(ii), the semiring $S$ has also a bi-absorbing element $o_S$ and $w_{ij}=o_S$ for all $i,j$.  Now, $A\overline{a}B+C=W$ for all $A=(a_{ij})$, $B=(b_{ij})$, $C=(c_{ij})$ in $T$. Hence we get that $o_S=w_{ij}=(\sum_{k}a_{ik})a(\sum_{\ell}b_{\ell j})+c_{ij}$ for all $i,j$.  Consequently,  $S_{n}a S_{n}+S=\{o_S\}$.

The converse implication is easy.
\end{proof}

We can now turn our attention to the relationship between the (non-)existence of proper bi-ideals in $S$ and in $\mathbf{M}_n(S)$.

\begin{proposition}\label{5.9}
 Assume that $n\geq 2$ and  $\mathbf{M}_n(S)$ is bi-ideal-simple. Then the semiring $S$ is bi-ideal-free.
\end{proposition}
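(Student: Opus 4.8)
The plan is to transfer bi-ideals back and forth between $S$ and $T=\mathbf{M}_n(S)$ and then use bi-ideal-simplicity of $T$. First, since $T$ is bi-ideal-simple we have $|T|\geq 2$, and because $|T|=|S|^{n^2}$ this already gives $|S|\geq 2$; so the whole task reduces to proving that the only bi-ideal of $S$ is $S$ itself.

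Let $I$ be an arbitrary bi-ideal of $S$. I would first observe that the set $\mathbf{M}_n(I)$ of all matrices with entries in $I$ is a non-empty bi-ideal of $T$: closure under entrywise addition with an arbitrary matrix of $T$ uses $I+S\subseteq I$, and closure under left and right multiplication by an arbitrary matrix uses $IS\subseteq I$ and $SI\subseteq I$ together with $I+I\subseteq I$, all of which are immediate from the definition of a bi-ideal. By bi-ideal-simplicity of $T$, either $\mathbf{M}_n(I)=T$, which forces $I=S$ (comparing entries, e.g.\ via the matrices $\overline{s}$), and we are done; or $|\mathbf{M}_n(I)|=1$, i.e.\ $|I|=1$.

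So the real content is to rule out the case $I=\{w\}$. Here $w$ must be bi-absorbing in $S$, since $\{w\}$ is a bi-ideal precisely when $w$ is bi-absorbing (as noted in the Preliminaries). To obtain a contradiction I would exhibit a proper bi-ideal of $T$ of size $\geq 2$, namely $J=\set{(a_{ij})\in T}{a_{ij}=w\ \text{for all}\ i\geq 2}$, the matrices whose rows $2,\dots,n$ are constantly $w$. Checking that $J$ is closed under addition and under both-sided multiplication by an arbitrary matrix $C$ of $T$ is a short computation using $x+w=w$ and $xw=w=wx$ repeatedly; the one place where the hypothesis $n\geq2$ is essential is the left product $CA$ with $A\in J$, where for a row index $i\geq2$ one has $(CA)_{ij}=c_{i1}a_{1j}+\sum_{k\geq2}c_{ik}w=c_{i1}a_{1j}+\sum_{k\geq2}w=c_{i1}a_{1j}+w=w$, the middle sum being a non-empty sum of copies of $w$ exactly because $n\geq2$. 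Since $|S|\geq2$ we get $|J|=|S|^n\geq2$ and $J\neq T$ (for instance $\overline{b}\notin J$ whenever $b\neq w$), contradicting bi-ideal-simplicity of $T$. Hence the case $|I|=1$ cannot occur, every bi-ideal of $S$ equals $S$, and (using $|S|\geq2$) the semiring $S$ is bi-ideal-free. I expect the verification that $J$ is a bi-ideal — concretely, its closure under left multiplication — to be the main, and essentially the only, obstacle; it is precisely this step that breaks down for $n=1$, matching the hypothesis $n\geq2$.
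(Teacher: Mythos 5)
Your proof is correct, and while it rests on the same basic idea as the paper's --- lifting a bi-ideal $I$ of $S$ to a bi-ideal of $T=\mathbf{M}_n(S)$ and invoking bi-ideal-simplicity of $T$ --- your choice of lift forces an extra step that the paper avoids. The paper takes $J=\set{(a_{ij})\in T}{a_{11}\in I}$, which has $|I|\cdot|S|^{n^2-1}$ elements; hence $|J|=1$ already forces $|S|=1$ (this is exactly where $n\geq 2$ enters) and contradicts $|T|\geq 2$ outright, with no need to analyse a bi-absorbing element. Your lift $\mathbf{M}_n(I)$ has $|I|^{n^2}$ elements, so the singleton case only yields $|I|=1$, and you must separately rule out a bi-absorbing $w\in S$; you do this by exhibiting the proper non-trivial bi-ideal of matrices whose rows $2,\dots,n$ are constantly $w$, and your verification (including the left-multiplication computation, where the non-empty sum $\sum_{k\geq 2}c_{ik}w$ absorbs the remaining term, and the counts $|J|=|S|^n\geq 2$, $J\neq T$) is sound. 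The price is a longer argument; what you gain is a self-contained, explicitly constructive proof of the bi-ideal analogue of Proposition \ref{6.0.1}(ii) --- namely that $\mathbf{M}_n(S)$ is never bi-ideal-simple for $n\geq 2$ when $S$ has a bi-absorbing element and $|S|\geq 2$ --- a fact the paper only imports, in its congruence form, from the literature.
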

\begin{proof}
 Let $I$ be a bi-ideal of $S$. The set $J=\set{A=(a_{ij})\in T}{a_{11}\in I}$ is a bi-ideal of $T=\mathbf{M}_n(S)$. If $J=T$, then $I=S$. If $|J|=1$, then $|I|=1$, $I=\{o_S\}$, where $o_S$ is bi-absorbing and, since $n\geq 2$, we have $|S|=1$ and $S=\{o_S\}$. But this is a contradiction, since $T$ is non-trivial.
\end{proof}

\begin{proposition}\label{5.11}
 Assume that $S$ is bi-ideal-free. Then:
 \begin{enumerate}
  \item[(i)]  If $I$ is a bi-ideal of $\mathbf{M}_n(S)$, then $\overline{S}=\set{\overline{b}}{b\in S}\sub I$.
  \item[(ii)] There is the least bi-ideal $J$ of $\mathbf{M}_n(S)$ and $J=Q_a=R_a=P_a$ for every $a\in S$.
 \end{enumerate}
\end{proposition}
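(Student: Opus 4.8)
The plan is to first isolate the structural fact that bi-ideal-freeness of $S$ provides. Since $S+S$ is always a bi-ideal of $S$ (it is nonempty and, by distributivity, closed under adding, left-multiplying and right-multiplying by elements of $S$) and $|S|\geq 2$, bi-ideal-freeness forces $S+S=S$; this activates Lemma~\ref{5.3}(iii) and the equality $Q_a=P_a$ of Lemma~\ref{bi-ideals}. The second preparatory step is the claim that $\overline{S}\sub P_a$ for \emph{every} $a\in S$. To see it, fix $a$ and put $N=\set{b\in S}{\exists\,c,d\in S,\ cad\leq_S b}$. This set contains every product $cad$, hence is nonempty; and since $\leq_S$ is transitive and stable under the operations, $N$ is a bi-ideal of $S$: if $cad\leq_S b$ then $cad\leq_S b\leq_S b+s$, while $s(cad)=(sc)ad\leq_S sb$ and $(cad)s=ca(ds)\leq_S bs$. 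Bi-ideal-freeness gives $N=S$, so for every $b$ there are $c,d$ with $cad\leq_S b$; using the constant witnesses $c_i\equiv c$, $d_j\equiv d$ in the definition of $P_a$ then shows $\overline{b}\in P_a$. Thus $\overline{S}\sub P_a=Q_a$.

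For part~(i): let $I$ be a bi-ideal of $T=\mathbf{M}_n(S)$. By Lemma~\ref{5.1}(i) there is $a\in S$ with $\overline{a}\in I$, hence $R_a\sub I$ because $R_a$ is the bi-ideal of $T$ generated by $\overline{a}$. Combined with $\overline{S}\sub Q_a\sub R_a$ (the claim above and Lemma~\ref{bi-ideals}) this gives $\overline{S}\sub I$.

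For part~(ii): by part~(i) every bi-ideal of $T$ contains the nonempty set $\overline{S}$, so the intersection $J$ of all bi-ideals of $T$ contains $\overline{S}$; being an intersection of bi-ideals, and nonempty, it is itself a bi-ideal, hence the least one. Fix $a\in S$. Since $\overline{a}\in\overline{S}\sub J$ and $J$ is a bi-ideal, minimality of $R_a$ among bi-ideals containing $\overline{a}$ yields $R_a\sub J$, while minimality of $J$ yields $J\sub R_a$; hence $R_a=J$. Then $Q_a$ is a bi-ideal (Lemma~\ref{bi-ideals}) with $J\sub Q_a\sub R_a=J$, so $Q_a=J$; and $P_a=Q_a$ by Lemma~\ref{bi-ideals} since $S+S=S$. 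Therefore $J=Q_a=R_a=P_a$ for every $a\in S$.

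The one delicate point I expect is the verification that $N$ is a nonempty bi-ideal of $S$: this is exactly where transitivity and stability of $\leq_S$ enter, and it is the step that converts bi-ideal-freeness of $S$ into the existence of the \emph{small} witnesses $c,d$ realizing an arbitrary $b$ as $cad$ or $cad+e$. After that, everything reduces to formal manipulations with the established properties of $Q_a$, $P_a$ and $R_a$ and the minimality of the intersection $J$.
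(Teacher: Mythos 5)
Your proof is correct; part (ii) is essentially identical to the paper's, but part (i) takes a genuinely different route. The paper applies bi-ideal-freeness of $S$ to the bi-ideal $Sa'S+S$ with $a'=n^2a$, so that every $b\in S$ is written \emph{exactly} as $b=n^2cad+e$, and then $\overline{b}=\overline{c}\cdot\overline{a}\cdot\overline{d}+\overline{e}\in I$ by a direct computation --- the factor $n^2$ is chosen precisely to absorb the two factors of $n$ arising from $\overline{x}\,\overline{y}=n\overline{xy}$. This keeps part (i) self-contained modulo Lemma \ref{5.1}. You instead apply bi-ideal-freeness to the set $N=\set{b\in S}{\exists\, c,d\in S,\ cad\leq_S b}$, which only yields the inequality $cad\leq_S b$, and you must then pass through the chain $\overline{S}\sub P_a=Q_a\sub R_a\sub I$; this makes your part (i) depend on Lemma \ref{bi-ideals} (hence on $S+S=S$ and Lemma \ref{5.3}(iii)) already at that stage, whereas the paper only invokes that machinery in part (ii). Both arguments are valid: yours trades the paper's $n^2$ bookkeeping for the quasiordering and the $P_a$ apparatus, and the one step that genuinely needs checking --- that $N$ is a non-empty bi-ideal, via transitivity and stability of $\leq_S$ --- you verify correctly. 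Your part (ii) then proceeds exactly as in the paper: every bi-ideal contains $\overline{S}$, so the intersection $J$ of all bi-ideals is a non-empty bi-ideal, and minimality on both sides forces $J=R_a=Q_a=P_a$.
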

\begin{proof}
 (i)
 Let $I$ be a bi-ideal of $\mathbf{M}_n(S)$. By Lemma \ref{5.1}, $\overline{a}\in I$ for some $a\in S$. Set $a'=n^2a$. The set $Sa'S+S$ is a bi-ideal of $S$, and hence $Sa'S+S=S$. If $b\in S$ is any element, then $b=ca'd+e$ for some $c,d,e\in S$. We have $\overline{b}=\overline{ca'd}+\overline{e}=\overline{n^2cad}+\overline{e}=n^2\overline{cad}+\overline{e}=\overline{c}\cdot\overline{a}\cdot\overline{d}+\overline{e}\in I$. We have proven that $\overline{S}\sub I$.

 (ii) If $I$ is a bi-ideal of $T$, then $\overline{S}\sub I$ by (i). Thus $R_a\sub I$ and $Q_a\sub I$ for every $a\in S$. As both $R_a$ and $Q_a$ are bi-ideals, it follows that $R_a=Q_a$ for every $a\in S$ and this set is the least bi-ideal of $T$. Finally, as $S$ is bi-ideal-free, the bi-ideal $S+S$ is equal to $S$. Hence, by Lemma \ref{5.5}, $Q_a=P_a$ for every $a\in S$. 
\end{proof}

\begin{proposition}\label{5.10}
 Let $n\geq 2$ and $|S|\geq 2$. Then the following conditions are equivalent:
 \begin{enumerate}
  \item[(i)] The matrix semiring $\mathbf{M}_n(S)$ is bi-ideal-simple.
  \item[(ii)] The matrix semiring $\mathbf{M}_n(S)$ is bi-ideal-free.
  \item[(iii)]  $Q_a=\mathbf{M}_n(S)$ for every $a\in S$.
  \item[(iv)] $\forall\ a\in S\ \forall (a_{ij})\in \mathbf{M}_n(S)\  \exists\ c_{1},\dots,c_{n},d_{1},\dots,d_{n}\in S\ \ \forall i,j \ \  c_ia b_j\leq_S a_{ij}$.
 \end{enumerate}
 If these equivalent conditions are satisfied, then $S$ is bi-ideal-free.
\end{proposition}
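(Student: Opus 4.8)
The plan is to close the cycle of implications (ii)$\Rightarrow$(i)$\Rightarrow$(ii), together with (ii)$\Rightarrow$(iii)$\Rightarrow$(iv)$\Rightarrow$(ii), and to read the last assertion off the proof of (iv)$\Rightarrow$(ii). The implication (ii)$\Rightarrow$(i) is immediate: $|\mathbf{M}_n(S)|\geq 2$ since $|S|\geq 2$, and a semiring whose only bi-ideal is itself is certainly bi-ideal-simple. For (i)$\Rightarrow$(ii) I would invoke Proposition~\ref{5.9} to conclude that $S$ is bi-ideal-free; since $|S|\geq 2$, the one-element set $\{o_S\}$ is then not a bi-ideal of $S$, so $S$ has no bi-absorbing element, and Proposition~\ref{2.5}(ii) transfers this to $T=\mathbf{M}_n(S)$, which therefore has no bi-absorbing element either, i.e., no one-element bi-ideal. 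Combined with bi-ideal-simplicity this shows that every bi-ideal of $T$ equals $T$, which is (ii).

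Next, by Lemma~\ref{bi-ideals} every $Q_a$ is a bi-ideal of $T$, so (ii) forces $Q_a=T$ for all $a\in S$, which is (iii). Since $Q_a\sub P_a\sub T$ by the same lemma, (iii) at once gives $P_a=T$ for every $a$, and this is precisely condition (iv) once one unwinds the definition of $P_a$ (with the harmless relabelling of $b_j$ as $d_j$).

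The substantial step is (iv)$\Rightarrow$(ii). First I would show that $S$ itself is bi-ideal-free. Let $I$ be a bi-ideal of $S$ and fix $a\in I$. For an arbitrary $b\in S$, apply (iv) to the matrix $\overline{b}$ to get $c_1,\dots,c_n,d_1,\dots,d_n\in S$ with $c_iad_j\leq_S b$ for all $i,j$; in particular $c_1ad_1\leq_S b$. As $I$ is a bi-ideal containing $a$, we have $c_1ad_1\in I$, and $c_1ad_1\leq_S b$ means either $b=c_1ad_1$ or $b=c_1ad_1+m$ for some $m\in S$; in both cases $b\in I$. Hence $I=S$, and since $|S|\geq 2$ this says that $S$ is bi-ideal-free, which is also the final assertion of the proposition. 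In particular $S+S$, being a bi-ideal by Proposition~\ref{2.6}, equals $S$, so the ``moreover'' clause of Lemma~\ref{bi-ideals} yields $Q_a=P_a$ for every $a$, and by (iv) this common set is all of $T$. Finally, if $I$ is any bi-ideal of $T$, then Lemma~\ref{5.1}(i) gives $\overline{a}\in I$ for some $a\in S$, whence $T=Q_a=T\cdot\overline{a}\cdot T+T\sub I$, because $I$ absorbs two-sided products with elements of $T$ and sums with elements of $T$; thus $I=T$ and $T$ is bi-ideal-free.

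The only delicate point I foresee is the sequencing of this last step: Lemma~\ref{bi-ideals} identifies $P_a$ with $Q_a$ only under the hypothesis $S+S=S$, so one must first extract bi-ideal-freeness of $S$ (and hence $S+S=S$) from (iv) applied to constant matrices, using the transitivity and stability of $\leq_S$, before the bi-ideals $Q_a$ can be put to work inside $T$.
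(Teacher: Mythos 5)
Your proof is correct and follows essentially the same route as the paper: the same bi-ideals $Q_a$ and $P_a$, Lemma~\ref{bi-ideals} and Lemma~\ref{5.1} do all the work, and the chain of implications is the same up to reordering. The one genuine (and harmless) divergence is in the step from (iv): the paper substitutes $a:=b+b$ into condition (iv) applied to a matrix with $(1,1)$-entry $b$ to conclude $c_1(b+b)d_1\leq_S b$, hence $b\in S+S$ and $S+S=S$, which is exactly what the ``moreover'' clause of Lemma~\ref{bi-ideals} needs; you instead apply (iv) to the constant matrices $\overline{b}$ and observe that $c_1ad_1\leq_S b$ forces $b$ into any bi-ideal of $S$ containing $a$, so that $S$ is bi-ideal-free and $S+S=S$ follows. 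Your variant is slightly stronger in that it delivers the proposition's final assertion (``$S$ is bi-ideal-free'') directly from condition (iv), whereas the paper obtains it via Proposition~\ref{5.9} inside the implication (i)$\Rightarrow$(iii); you correctly identified the sequencing issue (one must secure $S+S=S$ before invoking $Q_a=P_a$), and your handling of (i)$\Rightarrow$(ii) via Propositions~\ref{5.9} and~\ref{2.5}(ii) is a clean substitute for the paper's appeal to Lemma~\ref{5.5}.
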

\begin{proof}
 (i)$\Rightarrow$(iii): Let $a\in S$. If $|Q_a|=1$, then $S$ has a bi-absorbing element by Lemma \ref{5.5}, a contradiction with Proposition \ref{5.9}. Now $Q_a$ is a non-trivial bi-ideal of $T$ and $T$ is bi-ideal-simple. Hence  $Q_a=T$ and  $S$ is bi-ideal-free, by Proposition \ref{5.9}.

 (iii)$\Rightarrow$(ii): Let $I$ be a bi-ideal of $T$. By Lemma \ref{5.1}, $\overline{a}\in I$ for at least one $a\in S$. Since $I$ is a bi-ideal, we get $Q_a\sub I$. But then $I=T$.

 (ii)$\Rightarrow$(i): This is obvious.
 
 (iii)$\Rightarrow$(iv): Follows immediately from Lemma \ref{bi-ideals}.
 
 (iv)$\Rightarrow$(iii): Let $b\in S$  and consider a matrix $A=(a_{ij})\in T$ such that $a_{11}=b$. By the assumption of (iv), there are $c_1,d_1\in S$ such that $c_1(b+b)d_1\leq_S b$. Hence $b\in S+S$ and it follows that $S+S=S$. As the assumption (iv) means that $P_a=T$ for every $a\in S$, we obtain, by Lemma \ref{bi-ideals}, that $Q_a=P_a=T$ for every $a\in S$.
\end{proof}

\begin{corollary}\label{corollary_bi-ideal}
 Let $n,k$ be positive integers such that $n\geq k\geq 2$. If $\mathbf{M}_n(S)$ is bi-ideal-simple, then $\mathbf{M}_k(S)$ is bi-ideal-simple.
\end{corollary}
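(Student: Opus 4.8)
The plan is to reduce everything to the entrywise criterion~(iv) of Proposition~\ref{5.10}, which passes from $n\times n$ matrices to their $k\times k$ corners for free.

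First I would record that $|S|\geq 2$: if $|S|=1$, then $\mathbf{M}_n(S)$ is a one-element semiring, which is not bi-ideal-simple by definition, contradicting the hypothesis. Consequently Proposition~\ref{5.10} is applicable both to $\mathbf{M}_n(S)$ (as $n\geq 2$ and $|S|\geq 2$) and to $\mathbf{M}_k(S)$ (as $k\geq 2$ and $|S|\geq 2$). From bi-ideal-simplicity of $\mathbf{M}_n(S)$ and Proposition~\ref{5.10}, condition~(iv) holds for $\mathbf{M}_n(S)$: for every $a\in S$ and every $(a_{ij})\in\mathbf{M}_n(S)$ there exist $c_1,\dots,c_n,d_1,\dots,d_n\in S$ with $c_iad_j\leq_S a_{ij}$ for all $i,j$.

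Next I would verify condition~(iv) for $\mathbf{M}_k(S)$ directly. Fix $a\in S$, a matrix $(a_{ij})\in\mathbf{M}_k(S)$, and (using that $S$ is non-empty) an arbitrary element $s\in S$. Form the $n\times n$ matrix $A'=(a'_{ij})$ with $a'_{ij}=a_{ij}$ for $1\leq i,j\leq k$ and $a'_{ij}=s$ otherwise; this is a legitimate element of $\mathbf{M}_n(S)$ precisely because $n\geq k$. Applying condition~(iv) for $\mathbf{M}_n(S)$ to $a$ and $A'$ yields $c_1,\dots,c_n,d_1,\dots,d_n\in S$ with $c_iad_j\leq_S a'_{ij}$ for all $1\leq i,j\leq n$; restricting attention to $1\leq i,j\leq k$ gives $c_iad_j\leq_S a_{ij}$, so the tuple $(c_1,\dots,c_k,d_1,\dots,d_k)$ witnesses condition~(iv) for $\mathbf{M}_k(S)$ at the pair $\big(a,(a_{ij})\big)$. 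Since $a$ and $(a_{ij})$ were arbitrary, condition~(iv) holds for $\mathbf{M}_k(S)$, and Proposition~\ref{5.10} then yields that $\mathbf{M}_k(S)$ is bi-ideal-simple.

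There is no genuine obstacle here; the only point requiring a moment of care is that Proposition~\ref{5.10} must be invoked in both directions, which is what forces the preliminary remark $|S|\geq 2$. (A more direct attempt, lifting a bi-ideal of $\mathbf{M}_k(S)$ to a bi-ideal of $\mathbf{M}_n(S)$, runs into trouble because the naive ``place $I$ in the top-left $k\times k$ block'' set is not closed under matrix multiplication by arbitrary $n\times n$ matrices; the entrywise criterion~(iv) is the clean route.)
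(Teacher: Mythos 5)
Your proposal is correct and follows exactly the route the paper intends: the paper's proof is the one-line ``Follows immediately from Proposition~\ref{5.10}(i) and (iv),'' and you have simply filled in the details (the $|S|\geq 2$ check and the padding/restriction of matrices that transfers condition~(iv) from size $n$ to size $k$). No issues.
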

\begin{proof}
 Follows immediately from Proposition \ref{5.10}(i) and (iv).
\end{proof}

\bigskip

As one can notice, all the conditions (i)--(iv) in Proposition \ref{5.10} depend on $n$. But is it natural to  expect, in view of Theorem \ref{6.4}, that there will be a common equivalent condition for all $n$. In particular, an immediate question suggests itself: Let $n\geq 3$ and $\mathbf{M}_2(S)$ be bi-ideal-simple. Is then $\mathbf{M}_n(S)$  bi-ideal-simple?

In view of Proposition \ref{5.9} and the Propositions \ref{commutative_2} and \ref{5.13.0}, that immediately follow,  it is likely that the condition equivalent to (i)--(iv) in Proposition \ref{5.10} will be of the form
\begin{enumerate}
\item[(v)] $S$ is bi-ideal-free and for all $a,b\in S$ there is $c\in S$ such that $c\leq_S a$ and $c\leq_S b$.
\end{enumerate}

\bigskip

The following propositions are essential for proving our main results  in Section \ref{add_cancell}.

 \begin{proposition}\label{commutative_2}
 Let $S$ be commutative and $\mathbf{M}_n(S)$ be bi-ideal-free for some $n\geq 2$. Then for all $a,b\in S$ there is $c\in S$ such that $c\leq_S a$ and $c\leq_S b$. 
 \end{proposition}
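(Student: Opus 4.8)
The plan is to derive the statement from the characterisation of bi-ideal-freeness of $\mathbf{M}_n(S)$ in Proposition~\ref{5.10}, and then carry out an explicit computation inside the commutative semiring $S$.

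First I would set the stage. Since $\mathbf{M}_n(S)$ is bi-ideal-free it is in particular bi-ideal-simple, and $|\mathbf{M}_n(S)|\geq 2$ forces $|S|\geq 2$; hence Proposition~\ref{5.10} applies and its condition~(iv) holds, which (renaming the scalar to avoid a clash) reads: for every $e\in S$ and every $(a_{ij})\in\mathbf{M}_n(S)$ there are $c_1,\dots,c_n,d_1,\dots,d_n\in S$ with $c_i\,e\,d_j\leq_S a_{ij}$ for all $i,j$. Only the four positions $(1,1),(1,2),(2,1),(2,2)$ will be used, and these are distinct because $n\geq 2$. I would also record the elementary fact that in any semiring $x+y\leq_S z$ implies $x\leq_S z$ (from $x+(y+w)=z$, resp. $x+y=z$), and recall that $\leq_S$ is transitive and stable under multiplication.

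The crucial step is the choice of the instance of~(iv). Given $a,b\in S$, apply~(iv) with scalar $e=a+b$ and any matrix $B=(b_{ij})\in\mathbf{M}_n(S)$ satisfying $b_{11}=b_{22}=a$ and $b_{12}=b_{21}=b$ (the remaining entries, if $n\geq 3$, being arbitrary). This produces elements $c_i,d_j$; set $f_{ij}=c_id_j$ for $i,j\in\{1,2\}$. Commutativity of $S$ gives $c_i(a+b)d_j=(a+b)f_{ij}=af_{ij}+bf_{ij}$ together with the ``rank-one'' identity $f_{11}f_{22}=c_1c_2d_1d_2=f_{12}f_{21}$. From $af_{11}+bf_{11}\leq_S a$, $af_{22}+bf_{22}\leq_S a$, $af_{12}+bf_{12}\leq_S b$ and $af_{21}+bf_{21}\leq_S b$ one then extracts
$$af_{11}\leq_S a,\qquad af_{22}\leq_S a,\qquad af_{12}\leq_S b,\qquad bf_{21}\leq_S b.$$

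Finally I would exhibit the common lower bound $c:=af_{12}f_{21}$. Using the rank-one identity, $c=a(f_{12}f_{21})=a(f_{11}f_{22})=(af_{11})f_{22}\leq_S af_{22}\leq_S a$, so $c\leq_S a$; and directly $c=(af_{12})f_{21}\leq_S bf_{21}\leq_S b$, so $c\leq_S b$. Hence $c$ is the desired element. The only non-routine ingredient is guessing this instance of~(iv): the scalar has to ``contain'' both $a$ and $b$ so that, via $x+y\leq_S z\Rightarrow x\leq_S z$, each of the four relevant inequalities splits off the piece one needs, and the matrix has to place $a$ on the $(1,1),(2,2)$ slots and $b$ on the $(1,2),(2,1)$ slots, so that after the identity $f_{11}f_{22}=f_{12}f_{21}$ the single element $af_{12}f_{21}$ can be pushed below $a$ through the diagonal and below $b$ through the off-diagonal; the rest is just transitivity and stability of $\leq_S$.
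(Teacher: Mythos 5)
Your proof is correct and follows essentially the same route as the paper: the paper also applies the fact that the bi-ideal $P_{a+b}$ equals $\mathbf{M}_n(S)$ (your condition (iv) of Proposition~\ref{5.10} is just this restated) to the same test matrix with $a$ on the diagonal and $b$ off it, and then uses commutativity to rearrange a product of the resulting $c_i,d_j$ into a common lower bound. The only cosmetic difference is the final element (you take $ac_1c_2d_1d_2$, the paper takes $c_1c_2(a+b)d_1d_2$) and the intermediate bookkeeping via your ``rank-one'' identity.
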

  \begin{proof}
Let $a,b\in S$. By Lemma \ref{bi-ideals}, the set $P_{a+b}$ is a bi-ideal in $T=\mathbf{M}_n(S)$. As this semiring is bi-ideal-free, we have that $P_{a+b}=T$. Consider now a matrix $A=(a_{ij})\in T=P_{a+b}$ such that $a_{11}=a_{22}=a$ and $a_{12}=a_{21}=b$. By the definition of $P_{a+b}$, there are $c_{1},c_{2},d_{1},d_{2}\in S$ such that $c_{1}(a+b)d_1\leq_S a$, $c_{2}(a+b)d_2\leq_S a$, $c_{1}(a+b)d_2\leq_S b$ and  $c_{2}(a+b)d_1\leq_S b$. Now, set $c=c_1c_2(a+b)d_1d_2$. By the commutativity of $S$ it follows that $c=c_1\cdot c_2(a+b)d_2\cdot d_1\leq_S c_1\cdot a\cdot d_1\leq_S c_1(a+b)d_1\leq_S a$ and
$c=c_1\cdot c_2(a+b)d_1\cdot d_2\leq_S c_1\cdot b\cdot d_2\leq_S c_1(a+b)d_2\leq_S b$.
Hence  $c\leq_S a$ and $c\leq_S b$.
 \end{proof}

\begin{proposition}\label{5.13.0}
Assume that for all $a,b\in S$ there is $c\in S$ such that   $c\leq_S a$ and $c\leq_S b$.
 If $S$ is bi-ideal-free, then $\mathbf{M}_n(S)$ is bi-ideal-free.
\end{proposition}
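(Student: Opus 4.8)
The plan is to verify condition (iv) of Proposition~\ref{5.10} and then invoke that proposition. For $n=1$ there is nothing to prove, since $\mathbf{M}_1(S)\cong S$ is bi-ideal-free by hypothesis; moreover bi-ideal-freeness of $S$ forces $|S|\geq 2$, so for $n\geq 2$ we have both $|S|\geq 2$ and $|\mathbf{M}_n(S)|\geq 2$, and Proposition~\ref{5.10} is applicable. Thus it suffices to show: for every $a\in S$ and every $A=(a_{ij})\in\mathbf{M}_n(S)$ there exist $c_1,\dots,c_n,d_1,\dots,d_n\in S$ with $c_iad_j\leq_S a_{ij}$ for all $i,j$.

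First I would check that $SaS+S$ is a bi-ideal of $S$; this is routine, using only $SS\sub S$ and the non-emptiness of $S$. Since $S$ is bi-ideal-free, it follows that $SaS+S=S$. Hence each entry $a_{ij}$ can be written as $a_{ij}=e_{ij}af_{ij}+g_{ij}$ for suitable $e_{ij},f_{ij},g_{ij}\in S$, that is, $e_{ij}af_{ij}\leq_S a_{ij}$ for all $i,j$.

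The crucial step --- and the only one where the directedness assumption enters --- is to replace these $n^2$ separate estimates by a single one of ``rank-one'' form. The hypothesis provides a common $\leq_S$-lower bound for any two elements of $S$, hence, by an obvious induction, for any finite non-empty subset of $S$. So for each $i$ pick $c_i\in S$ with $c_i\leq_S e_{ij}$ for $j=1,\dots,n$, and for each $j$ pick $d_j\in S$ with $d_j\leq_S f_{ij}$ for $i=1,\dots,n$. Since $\leq_S$ is transitive and stable under multiplication,
$$c_iad_j\leq_S e_{ij}ad_j\leq_S e_{ij}af_{ij}\leq_S a_{ij}\qquad\text{for all }i,j,$$
which is precisely condition (iv) of Proposition~\ref{5.10}. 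Therefore $\mathbf{M}_n(S)$ is bi-ideal-simple, and hence bi-ideal-free.

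The only real difficulty is the rank-one reduction in the third paragraph: the elements $e_{ij},f_{ij}$ produced from $SaS+S=S$ are a priori unrelated across different rows and columns, and nothing guarantees a uniform choice of $c_i,d_j$ without some form of downward directedness of $\leq_S$. The hypothesis is exactly what is needed to supply this, and once it is in place the rest is just bookkeeping with the quasiorder $\leq_S$.
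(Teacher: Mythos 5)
Your proof is correct, but it takes a different route from the paper's. You verify condition (iv) of Proposition \ref{5.10}: from bi-ideal-freeness of $S$ you get $SaS+S=S$, hence entrywise witnesses $e_{ij}af_{ij}\leq_S a_{ij}$, and you then use downward directedness (extended to finite sets by transitivity of $\leq_S$) to uniformize these into a rank-one minorant $(c_iad_j)$. The paper instead argues additively: it shows $S+S=S$, uses directedness to find, for each matrix $A$, a single $d\in S$ with $A\in\overline{d}+\mathbf{M}_n(S)$, and combines this with Proposition \ref{5.11}(i) (every bi-ideal contains all constant matrices $\overline{S}$), so $A\in\overline{d}+T\subseteq I$. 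Both arguments rest on the same two inputs and are of comparable length; yours produces the stronger multiplicative ``rank-one lower bound'' form and funnels the conclusion through the equivalences of Proposition \ref{5.10}, whereas the paper's only needs a common additive lower bound for the $n^2$ entries. One cosmetic remark: your closing sentence ``bi-ideal-simple, and hence bi-ideal-free'' has the implication backwards in general; what Proposition \ref{5.10} actually hands you from (iv) is condition (ii), bi-ideal-freeness, directly, so no such inference is needed.
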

\begin{proof}
First, the set $S+S$ is a bi-ideal of $S$. Since $S$ is bi-ideal-free, it follows that $S+S=S$.  By our assumption,  for all $a,b\in S$ there is  $c\in S$ such that   $c\leq_S a$ and $c\leq_S b$. As $c\in S=S+S$ there are $d,e\in S$ such that $c=d+e$  and therefore $a,b\in d+S$. By induction, we easily obtain that for every $A\in T=\mathbf{M}_n(S)$  there  is $d\in S$ such that $A\in\overline{d}+T$.

 Now, let $I$ be a bi-ideal of $T=\mathbf{M}_n(S)$. By Proposition \ref{5.11}, $\overline{S}\sub I$. By the previous part of the proof, for every $A\in T$ there is $d\in S$ such that $A\in \overline{d}+T$. It follows that $A\in \overline{d}+T\sub I$ and therefore $I=T$. Hence $T$ is bi-ideal simple.
\end{proof}

\begin{proposition}\label{divisible}
 Let $S$ be additively archimedean and additively uniquely $m$-divisible for some $m\geq 2$. Then for all $a,b\in S$ there is $c\in S$ such that  $c\leq_S a$ and $c\leq_S b$. 
\end{proposition}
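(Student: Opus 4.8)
The plan is to exploit the archimedean property to bound (some divisible refinement of) $a$ by $b$, and then descend back by unique divisibility. Fix $a,b\in S$. By additive archimedeanity there is $m\in\N$ with $a\leq_S mb$; replacing the exponent by a power if necessary, we may assume the bound has the form $a\leq_S m^k b$ for some $k\ge 1$ (since $m\ge 2$ implies $m^k\ge m$ for all $k\ge 1$, and $mb\leq_S m^k b$, so any archimedean bound is dominated by one of this shape). Now repeatedly apply unique $m$-divisibility: write $a=m^k a'$ for the unique $a'\in S$ with this property (obtained by dividing $k$ times). I then claim $c=a'$ works.

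The key step is the implication $a\leq_S m^k b \implies a'\leq_S b$, where $a=m^k a'$. This is where unique $m$-divisibility (as opposed to mere divisibility) is essential. Suppose $a+x=m^k b$ for some $x\in S$; again using divisibility write $x=m^k x'$. Then $m^k a' + m^k x' = m^k b$, i.e. $m^k(a'+x')=m^k b$. By uniqueness of $m$-division — iterated $k$ times — we conclude $a'+x'=b$, hence $a'\leq_S b$. (If instead $a=m^k b$ outright, then $m^k a'=m^k b$ forces $a'=b$ by the same uniqueness argument, and $a'\leq_S b$ trivially.) Combined with $a'\leq_S a$ — which holds because $a=m^k a'=a'+(m^k-1)a'$ when $m^k\ge 2$, so $a'+((m^k-1)a')=a$ — we get $c=a'\leq_S a$ and $c=a'\leq_S b$, as required.

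The main obstacle — really the only subtle point — is making sure one is allowed to "divide an inequality through by $m^k$." A priori $\leq_S$ need not be a partial order and addition need not be cancellative, so from $m^k(a'+x')=m^k b$ one cannot immediately cancel the multiplier. The point is that $y\mapsto my$ is, by hypothesis, a \emph{bijection} on $S$ (existence plus uniqueness of the $m$-th part), and an injective additive map pulls back the relation $=$; one does \emph{not} need cancellativity of $+$ itself, only injectivity of multiplication-by-$m$. I would state this cleanly as an auxiliary observation: if $S$ is additively uniquely $m$-divisible then the map $\mu_m\colon S\to S$, $\mu_m(s)=ms$, is a semiring-module isomorphism onto $S$ (it is a surjective homomorphism for $+$ by existence, and injective by uniqueness), hence $ms=mt$ implies $s=t$; iterating gives the same for $m^k$. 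With that observation in hand the argument above is routine. One small bookkeeping remark: the degenerate possibility $m^k=1$ does not arise since $m\ge 2$, so the "$a'\leq_S a$" half genuinely uses $m^k-1\ge 1$; if one were worried about the case $a=a'$ (i.e. $k=0$), it does not occur because we chose $k\ge 1$.
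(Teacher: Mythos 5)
Your proof is correct. It rests on the same mechanism as the paper's proof --- convert an archimedean bound into one by a power $m^{\ell}$ of $m$, then ``divide through'' using the fact that unique $m$-divisibility makes $x\mapsto m^{\ell}x$ a bijection of $S(+)$ --- but the decomposition is different. The paper introduces an auxiliary element $e\in S$, applies the archimedean property \emph{twice} to obtain $e\leq_S m^{\ell}a$ and $e\leq_S m^{\ell}b$, and then divides $e$ together with the two witnesses $f_1,f_2$ by $m^{\ell}$ to produce $c$ with $c+d_1=a$ and $c+d_2=b$. You apply the archimedean property only \emph{once}, comparing $a$ and $b$ directly via $a\leq_S m^{k}b$, and divide $a$ itself: the unique $a'$ with $m^{k}a'=a$ satisfies $a'\leq_S b$ by cancelling $m^{k}$ from $m^{k}(a'+x')=m^{k}b$, while $a'\leq_S a$ comes for free from $a=a'+(m^{k}-1)a'$, using $m^{k}\geq 2$. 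Your route is marginally more economical (no auxiliary element, a single archimedean bound and a single division chain), and your auxiliary observation --- that it is the injectivity of multiplication by $m$, iterated, rather than additive cancellativity of $S$, that licenses cancelling the multiplier --- is exactly the point the paper's proof also relies on when it invokes the ``divisibility uniqueness''. Both arguments are complete.
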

\begin{proof}
 Assume the conditions on $S$. Let $a,b\in S$. Pick an element $e\in S$. By the achimedean property, there are positive integers $k_1$ and $k_2$ such that $e\leq_{S} k_1 a$ and $e\leq_S k_2 b$. Further, there is $\ell\geq 1$ such that $m^{\ell}\geq 1+\max\{k_1,k_2\}$. Hence  $e\leq_{S} k_1 a +a\leq m^{\ell}a$ and $e\leq_S k_2 b+b\leq_S m^{\ell} b$ and there are $f_1,f_2\in S$ such that $e+f_1=m^{\ell}a$ and $e+f_2=m^{\ell}b$. 
 
 Since $S$ is additively uniquely $m$-divisible, the transformation $\psi:x\mapsto m^{\ell}x$ is an automorphism of $S(+)$.  Therefore there are  $c,d_1,d_2\in S$ such that $m^{\ell}c=e$, $m^{\ell}d_1=f_1$ and $m^{\ell}d_2=f_2$. Now, the equalities $m^{\ell}(c+d_1)=m^{\ell}a$ and $m^{\ell}(c+d_2)=m^{\ell}b$ imply, by the the divisibility uniqueness, that $c+d_1=a$ and $c+d_2=b$. We conclude that  $c\leq_S a$ and $c\leq_S b$.
\end{proof}

\section{Congruence-simple matrix semirings -- The additively cancellative case}\label{add_cancell}

Recall that for an additively cancellative  semiring $S$ there exists a unique ring $R$ (up to an isomorphism) that contains $S$ and is minimal with this property. Such a ring is called the \emph{difference ring} and is denoted by  $R=S-S$. Note that then the difference ring of $\mathbf{M}_{n}(S)$ is $\mathbf{M}_{n}(R)$.

An additively cancellative semiring $S$ is called \emph{conical} if the difference ring $R=S-S$ is simple. 

Using this terminology, one can then formulate the following key classification result.

\begin{theorem}\cite[Theorem 5.1]{simple}\label{bashir}
Let $S$ be an additively cancellative semiring. Then
$S$ is congruence-simple if and only if the following three conditions are satisfied:
\begin{enumerate}
\item[(1)] $S$ is conical,
\item[(2)] $S$ is bi-ideal-simple,
\item[(3)] $S$ is additively archimedean.
\end{enumerate}
\end{theorem}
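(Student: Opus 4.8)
The plan is to prove Theorem~\ref{bashir} (the statement ``$S$ congruence-simple $\iff$ $S$ conical, bi-ideal-simple, and additively archimedean'') by exploiting the fact that $S$ embeds in its difference ring $R = S - S$, so that congruences on $S$ can be compared with congruences on $R$, while bi-ideals of $S$ control the ``ideal-like'' behaviour that $R$ alone cannot see. First I would set up the basic dictionary: every congruence $\varrho$ on $S$ generates a congruence $\bar\varrho$ on $R$ (equivalently an ideal $I_\varrho = \{x - y : (x,y) \in \varrho\}$ of $R$), and conversely every ideal $J$ of $R$ restricts to a congruence $J \cap (S\times S)$-style relation on $S$; the subtlety is that distinct $S$-congruences can induce the same $R$-ideal, and the gap is measured precisely by additive phenomena (the standard quasiordering $\leq_S$ and bi-ideals). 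I would record that $S$ additively cancellative and $|S| \geq 2$ forces $\leq_S$ to be a genuine partial-order-type relation, and that the bi-ideals $S+S$, $a + S$, $SaS + S$ etc.\ are the relevant objects.

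For the forward direction, assume $S$ is congruence-simple. Bi-ideal-simplicity is immediate from the general fact already noted in the Preliminaries that for a bi-ideal $I$ the relation $(I\times I) \cup \mathrm{id}_S$ is a congruence, hence congruence-simplicity $\Rightarrow$ bi-ideal-simplicity. For conicality I would argue that if $R = S-S$ had a proper nonzero (two-sided) ideal $J$, then $J \cap S$ together with the addition would produce a proper congruence on $S$ (via the quotient map $R \to R/J$ restricted to $S$, whose kernel-congruence on $S$ is nontrivial because $J \neq 0$ meets $S-S$ nontrivially, and proper because $J \neq R$); the one thing to check carefully is that $J$ can be chosen to actually intersect $S$ in a way that separates two elements of $S$, using that $S$ generates $R$ as a group. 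For the archimedean property I would consider the relation ``$a \sim b$ iff $a \leq_S mb$ and $b \leq_S na$ for some $m,n \in \N$'' (or the congruence generated by collapsing all of $S_n = S + \cdots + S$), show it is a congruence, and show that if $S$ is \emph{not} archimedean this congruence is proper and nontrivial, contradicting simplicity.

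For the converse, assume (1)--(3) and let $\varrho$ be a congruence on $S$ with $\varrho \neq \mathrm{id}_S$. Passing to $R = S-S$, the congruence $\varrho$ generates an ideal $I$ of $R$; since $R$ is simple, either $I = 0$ or $I = R$. If $I = R$, then in particular $1_R$ (or, in the non-unital simple ring case, enough of $R$) lies in the span of differences coming from $\varrho$, and one pushes this back through archimedean-ness and bi-ideal-simplicity to conclude $\varrho = S \times S$: concretely, some pair $(a,b) \in \varrho$ with $a \neq b$ yields $c := $ (a suitable element built from $a - b$) with the bi-ideal generated by the relevant data being all of $S$, and then the archimedean property lets one ``amplify'' to reach any prescribed pair. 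The genuinely delicate case is $I = 0$: this says $(a,b) \in \varrho \Rightarrow a = b$ \emph{in $R$}, i.e.\ $\varrho$ is contained in the equality relation of $R$, so $\varrho = \mathrm{id}_S$ after all --- wait, that is the trivial case; the real content is showing $I = 0$ is impossible once $\varrho \neq \mathrm{id}_S$, i.e.\ that a nonidentity $S$-congruence must produce a \emph{nonzero} $R$-ideal. This is where additive cancellativity is used essentially: $(a,b)\in\varrho$, $a\neq b$ in $S$, and $S \hookrightarrow R$ injective give $a - b \neq 0$ in $I$.

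The step I expect to be the main obstacle is the amplification argument in the $I = R$ case: bridging from ``some difference $a-b$ generates $R$ as an ideal'' to ``$\varrho$ is the full relation $S \times S$'' requires combining three ingredients at once --- the ring structure of $R$ (to write $1_R$ or arbitrary elements as sums $\sum r_k(a-b)s_k$), additive cancellativity (to lift such ring identities back to honest identities in $S$, which may require adding a correction term and hence invoking bi-ideal-simplicity of $S$ to know that correction term can be absorbed), and the archimedean property (to handle the fact that in $S$ one only has $\leq_S$, not genuine subtraction, so one must dominate rather than equate and then use $a \leq_S mb$ to close the loop). I would structure this as a lemma: ``if $\varrho$ is a congruence on the additively cancellative, conical, bi-ideal-simple, archimedean semiring $S$ and $(a,b)\in\varrho$ with $a\neq b$, then $\varrho = S\times S$,'' and prove it by first showing the bi-ideal of $S$ generated by $\{|a-b|\text{-type elements}\}$ is all of $S$, then using archimedean-ness to conclude every pair $(x,y) \in S \times S$ satisfies $(x + t, y + t) \in \varrho$ for a common $t$, and finally cancelling $t$.
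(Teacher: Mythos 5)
This theorem is imported verbatim from \cite[Theorem 5.1]{simple}; the paper under review gives no proof of it, so I can only judge your proposal on its own merits. Your overall architecture (pass to the difference ring $R=S-S$, match congruences of $S$ with ideals of $R$, and account for the mismatch via bi-ideals and $\leq_S$) is the right one, and the forward direction is essentially fine: bi-ideal-simplicity is immediate, conicality follows from the congruence $\{(a,b): a-b\in J\}$ attached to a proper nonzero ideal $J$, and your relation $a\sim b$ iff $a\leq_S mb$ and $b\leq_S na$ is indeed a congruence. (Small gap there: you must also show $\sim\neq id_S$ before simplicity forces $\sim=S\times S$; this follows since $a\sim 2a$ always, and $a=2a$ in an additively cancellative semiring forces $a$ to be additively neutral, whence $|S|=1$ or $S$ is a ring, and rings are trivially archimedean.)

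The genuine gap is the final step of the converse, which you label ``finally cancelling $t$.'' From $I_\varrho=R$ one does get, for all $x,y\in S$, some $t$ with $(x+t,y+t)\in\varrho$; but passing from this to $(x,y)\in\varrho$ is \emph{not} a cancellation that additive cancellativity of $S$ can perform. Cancellativity is a statement about equality in $S$, whereas what you need is that the quotient $S/\varrho$ be additively cancellative, which a priori it is not: on $S=\N$ the relation ``$x=y$ or $x,y\geq 5$'' is a congruence whose ``translation closure'' is all of $\N\times\N$, so the implication $(x+t,y+t)\in\varrho\Rightarrow(x,y)\in\varrho$ genuinely fails for non-conical $S$, and nothing in your sketch isolates where conicality, bi-ideal-simplicity and archimedeanness enter to rescue it. A working route (to indicate what is missing): in $T=S/\varrho$ one first finds, for each $u$, some $z$ with $z+u=z$ (from $I_\varrho=R$ plus archimedeanness); then, modulo the kernel $\kappa$ of $\leq_T$, the class of such a $z$ is a greatest, additively absorbing element $\omega$, and archimedeanness ($\omega\leq m(\omega p)=\omega p$) upgrades it to a \emph{bi}-absorbing element, so $\{\omega\}$ is a bi-ideal whose preimage in $S$ must be all of $S$ by bi-ideal-simplicity; this shows $\leq_T$ is the total relation, and a final saturation argument for the sets $M_g=\{x:(x,x+g)\in\varrho\}$ (which are $\varrho$-saturated and contain everything $\geq_S kg$) yields $M_g=S$ for all $g$ and hence $\varrho=S\times S$. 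None of this is a routine consequence of the ingredients you list, so as it stands the converse is not proved.
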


\begin{proposition}\label{ordering}
  Let $S$ be a congruence-simple additively cancellative semiring that is not a ring.  Let $R=S-S$ be the difference-ring of $S$. Then the relation $\leq$ on $R$ defined by $r\leq s$ for $r,s\in R$ if and only if $r=s$ or $s-r\in S$ is a (partial) ordering on $R$. Moreover:
  \begin{enumerate}
   \item[(i)] The restriction of $\leq$ to $S\times S$ is $\leq_S$.
   \item[(ii)] For all $r,s,t,u\in R$, $0<u$, the condition $r\leq s$ implies that $r+t\leq s+t$, $ru\leq su$ and $ur\leq us$. 
   \item[(iii)] For all $r,u\in R$, $0<u$, there is $n\in\N$ such that $r\leq nu$.
  \end{enumerate}
In particular, $(R,\leq)$ is an ordered simple ring.
\end{proposition}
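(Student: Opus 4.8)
The plan is to first establish that $\le$ is a partial order, then derive the compatibility properties (i)--(iii), and finally observe that these together give the "ordered simple ring" conclusion. Throughout we use that $S$ is additively cancellative, so $R = S-S$ makes sense, and that $S$ is not a ring, which by Theorem \ref{unity} (contrapositive, via Proposition \ref{6.0.1}(iv) and Theorem \ref{bashir}) ensures $S$ is a proper subset of $R$ behaving like a "positive cone". The key structural fact I would extract first: since $S$ is additively cancellative and embeds in $R$, for $a,b \in S$ we have $a = b$ in $R$ iff $a=b$ in $S$, and $a +_R (-b) \in S$ iff $a \ge_S b$; this is what makes (i) automatic once $\le$ is shown to be an order.

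For \emph{reflexivity} $\le$ is reflexive by definition. For \emph{transitivity}, if $r \le s$ and $s \le t$ with $r \ne s \ne t$, then $s - r \in S$ and $t - s \in S$, so $t - r = (t-s)+(s-r) \in S + S \subseteq S$, hence $r \le t$ (the degenerate cases where two of $r,s,t$ coincide are trivial). The main obstacle is \emph{antisymmetry}: I must show that if $r \le s$ and $s \le r$ with $r \ne s$, then we get a contradiction. In that case both $s - r$ and $r - s = -(s-r)$ lie in $S$; writing $d = s-r \in S$, also $-d \in S$, so $d + (-d) = 0 \in S$ and more importantly $S$ contains an additive inverse of one of its elements. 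Then $d +_R(-d) = 0$, and for any $a \in S$, $a = a + 0 = a + d + (-d)$, so $a \ge_S a + d$; iterating, $a \ge_S a + md$ for all $m$. Combined with conicality/archimedeanity of $S$ (Theorem \ref{bashir}(3)), I would push this to show $S$ is in fact a group under addition — but an additively cancellative semiring whose additive structure is a group is a ring, contradicting the hypothesis that $S$ is not a ring. (Concretely: if $0 \in S$ then $S$ has an additively neutral element, so by Theorem \ref{unity}(i), $S$ is a ring.) This is the step that really uses "$S$ is not a ring," and it is where I expect to spend the most care.

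For the compatibility claims, \emph{(i)} is immediate from the definitions once antisymmetry is in hand, as noted above. For \emph{(ii)}, translation-invariance $r \le s \Rightarrow r+t \le s+t$ follows since $(s+t)-(r+t) = s - r$, unchanged. For the multiplicative part, suppose $r \le s$ and $0 < u$, so $s - r \in S$ and $u \in S$ (since $u = u - 0 \in S$). Then $su - ru = (s-r)u \in SS \subseteq S$ because $S$ is closed under multiplication, giving $ru \le su$; symmetrically $us - ur = u(s-r) \in S$ gives $ur \le us$. For \emph{(iii)}: given $r \in R$ and $0 < u$, write $r = a - b$ with $a,b \in S$; then $r \le a$ since $a - r = b \in S$, so it suffices to find $n$ with $a \le nu$, i.e. $nu - a \in S$. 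Here $u \in S$ and $a \in S$, and additive archimedeanity of $S$ (Theorem \ref{bashir}(3)) yields $m \in \N$ with $a \le_S m u$, i.e. $a + c = mu$ for some $c \in S$, so $mu - a = c \in S$ and $a \le mu$; taking $n = m$ (and being slightly careful if $r = a$ already, i.e. $b = 0$, handled the same way) finishes it.

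Finally, \emph{"$(R,\le)$ is an ordered simple ring"} is just the conjunction: $R$ is simple because $S$ is conical (definition, together with Theorem \ref{bashir}(1)), $\le$ is a partial order on $R$ compatible with $+$ and with multiplication by positive elements by (ii), and it is an archimedean order by (iii); this is exactly the package of axioms for an ordered simple ring, so nothing further is needed. I would present the antisymmetry argument in full detail and treat the rest as short verifications, flagging only that one must consistently handle the "$r=s$" edge cases built into the definition of $\le$.
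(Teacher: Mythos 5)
Your proof is correct, and parts (i)--(iii) together with the final ``ordered simple ring'' remark are handled exactly as the paper does: reduce everything to properties of $\leq_S$ on the positive cone $S$ and quote Theorem \ref{bashir} for conicality and the archimedean property. The one place where you take a genuinely different route is antisymmetry, which is also the only nontrivial point. The paper proves it by forming the kernel $\varrho$ of the quasiordering $\leq_S$ (i.e.\ $(a,b)\in\varrho$ iff $a=b$ or both $a\leq_S b$ and $b\leq_S a$), observing that $\varrho$ is a congruence, and using congruence-simplicity to conclude $\varrho=\mathrm{id}_S$ (the alternative $\varrho=S\times S$ would make $S$ a ring). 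You instead argue directly in $R$: if $d=s-r$ and $-d$ both lie in $S$, then $0=d+(-d)\in S$, so $S$ has an additively neutral element and Theorem \ref{unity}(i) forces $S$ to be a ring, a contradiction. Both arguments ultimately lean on congruence-simplicity (yours through the citation of Theorem \ref{unity}(i)); yours is the more elementary and self-contained of the two, since it avoids having to verify that $\varrho$ is a congruence, while the paper's kernel-congruence formulation is the more conceptual one and makes the dichotomy ``either $\leq_S$ is an order or $S$ is a ring'' explicit. Your intermediate musings about iterating $a\geq_S a+md$ and invoking the archimedean property are unnecessary --- the parenthetical ``$0\in S$ implies $S$ is a ring'' already closes the argument --- and could simply be deleted.
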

\begin{proof}
 Let $\varrho$ denote the kernel of the standard quasiordering $\leq_S$. That is, $(a,b)\in\varrho$ if and only if either $a=b$ or both $a\in b+S$ and $b\in a+S$ hold. It is immediately clear that $\varrho$ is a congruence of the semiring $S$. If $\varrho=S\times S$, then $S$ is a ring, a contradiction. Hence $\varrho\neq S\times S$ and, as $S$ is congruence-simple, it follows that $\varrho=id_S$ and the quasiordering $\leq_S$ is an ordering of the semiring $S$. Hence the ordering $\leq$ on $R$, induced by the ordering $\leq_S$, is of the form $r\leq s$ for $r,s\in R$ if and only if $r=s$ or $s-r\in S$. Now the conditions (i) and (ii) follow easily from the properties of $\leq_S$. Finally, the condition (iii) follows from the archimedean property of $S$, by Theorem \ref{bashir}.
\end{proof}

\begin{remark}\label{ordering_2}
 Let $S$ be a congruence-simple additively cancellative semiring that is not a ring. 
 Let $R=S-S$ be the difference ring of $S$ with the ordering $\leq$ from Proposition \ref{ordering}. Note that the ordering $\leq_S$ on $S$ is always \emph{upwards directed} (as we have $a,b\leq_S a+b$). 
 
 Now if we assume that for all $a,b\in S$ there is $c\in S$ with $c\leq_S a$ and $c\leq_S b$ (i.e., the ordering $\leq_S$ on $S$ is \emph{downwards directed}), then the ordering $\leq$ is both upwards and downwards directed on $R$:

 Indeed, take $r,s\in R$, $r=a-b$, $s=c-d$, where $a,b,c,d\in S$. Then $r,s\leq a+c$ $(\in S)$. Furthermore, $e\leq_S a,b$ for some $e\in S$ and henceforth $t\leq r,s$, where $t=e-(b+d)$$(\in R)$.  
\end{remark}

Next, let us show the extent to which the properties from Theorem \ref{bashir} hold for matrix semirings.

\begin{proposition}\label{3.1}
Let $S$ be a semiring. Then the matrix semiring $\mathbf{M}_n(S)$ is additively archimedean if and only if the semiring $S$ is such.
\end{proposition}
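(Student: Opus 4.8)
The statement is an ``if and only if'' about the archimedean property, so the plan is to prove two implications, and the natural direction to exploit is that $S$ embeds diagonally (via $a\mapsto \overline a$) and also sits inside $\mathbf{M}_n(S)$ as the upper-left corner, so archimedean-ness should pass back and forth. First I would record the elementary fact that $\leq_S$ and the standard quasiordering $\leq_T$ on $T=\mathbf{M}_n(S)$ interact well with the maps $S\to T$ we have available: if $a\leq_S b$ then $\overline a\leq_T\overline b$ and $E$-type/corner embeddings are monotone as well. The key numerical identities are $m\overline a=\overline{ma}$ and, from the preliminaries, $\overline a\,\overline b=n\overline{ab}$; these let me translate ``$a\leq_S mb$'' into a matrix inequality and conversely.

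\textbf{From $S$ archimedean to $\mathbf{M}_n(S)$ archimedean.} Suppose $S$ is additively archimedean and take $A=(a_{ij}),B=(b_{ij})\in T$. Put $a=\sum_{i,j}a_{ij}$; then every entry $a_{ij}$ satisfies $a_{ij}\leq_S a$, so $A\leq_T \overline a$. It therefore suffices to dominate $\overline a$ by a multiple of $B$. The trouble is that $B$ may have some zero-ish entries, so I cannot simply compare entrywise. Instead I would use the other diagonal-type element: choosing the entry trick in Lemma~\ref{5.1}(i), there is an element $b\in S$ with $\overline b\leq_T$ (a suitable sum of shifts of) $B$; more concretely, $\overline b = B + B'$ where $b=\sum b_{ij}$ and $B'$ collects the complementary sums, so $\overline b\leq_T 2B$ is false in general but $\overline b = B+B'$ gives $B\leq_T \overline b$ the wrong way. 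The clean route is: since the $b_{ij}$ need not be comparable to anything useful, first reduce to the case $B=\overline b$ by noting $B\le_T \overline b$ is what we proved for $A$, but we need a \emph{lower} bound on $B$. Here I would instead argue directly on one entry: pick $(k,\ell)$ and observe $b_{k\ell}\le_S \sum b_{ij}=b$; apply the archimedean property of $S$ to get $a\le_S m b$ for some $m$, and then use $\overline a \le_T m\,\overline b$ together with a matrix-unit computation $E_{ii}(1)$-style (valid once we know, as in the cancellative setting, that $S$ has a zero) — but since the proposition is stated for a general semiring, the honest argument must avoid matrix units. So the real plan: $A\le_T\overline a$, and $\overline a = \overline{mb}\cdot(\text{something})$? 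No — use $\overline a\,\overline{x} = n\overline{ax}$; we do not obviously get $\overline a\le_T\,$multiple of $B$. I expect the paper gets around this by comparing $A$ to $n\cdot(\text{matrix all of whose entries are }a)$ and then to powers/multiples of $B$ coordinatewise after first adding $B$ to itself enough times to ``fill'' every entry — i.e.\ $(m B)_{ij}\ge_S \sum_k$ contributions, and one shows $mB \ge_T \overline{(\text{something}\,b)}$ for large $m$ using $B\le_T\overline b$ applied to $-B$... This is the step I flag as the \textbf{main obstacle}: making the passage ``$S$ archimedean $\Rightarrow T$ archimedean'' work entrywise without special elements, presumably by first showing $S+S=S$ is \emph{not} available, so one must carry the sums $c_i a d_j$ machinery of Lemma~\ref{5.3} to realize $\overline a$ below a multiple of $B$.

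\textbf{From $\mathbf{M}_n(S)$ archimedean to $S$ archimedean.} This direction should be easy and I would do it first in the write-up: given $a,b\in S$, apply the archimedean hypothesis in $T$ to $\overline a$ and $\overline b$ to get $m\in\N$ with $\overline a\le_T m\overline b=\overline{mb}$, i.e.\ there is $C=(c_{ij})\in T$ with $\overline a + C=\overline{mb}$; reading off the $(1,1)$ entry gives $a+c_{11}=mb$, so $a\le_S mb$. Hence $S$ is additively archimedean.

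\textbf{Assembly.} I would present the trivial direction ($T$ archimedean $\Rightarrow S$ archimedean) in two lines as above, then devote the bulk to the converse, organized as: (1) reduce $A$ to $\overline a$ with $a=\sum a_{ij}$ using $A\le_T\overline a$; (2) show that for the given $B$ there is $b\in S$ and an integer $p$ with $\overline b \le_T pB$ — this is the crux and I would prove it by adding $B$ to itself and using distributivity/$\overline{x}\,\overline{y}=n\overline{xy}$ to spread mass into every entry, or by invoking that $S+S$ being a bi-ideal forces enough divisibility in the relevant cases; (3) apply $S$-archimedeanness to get $a\le_S q b$, hence $\overline a\le_T q\overline b\le_T qpB$, so $A\le_T (qp)B$. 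The only genuinely delicate point is step~(2); everything else is bookkeeping with $\le_T$ and the identities $m\overline a=\overline{ma}$, $\overline a\,\overline b=n\overline{ab}$.
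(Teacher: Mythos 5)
Your second direction ($\mathbf{M}_n(S)$ archimedean $\Rightarrow$ $S$ archimedean) is correct and coincides with the paper's argument: compare $\overline a$ with $m\overline b=\overline{mb}$ in $\mathbf{M}_n(S)$ and read off a single entry.

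The first direction, however, is not actually proven: you yourself flag the crux (dominating $A$, or $\overline a$, by a multiple of $B$) as an unresolved obstacle, and the tools you reach for ($\overline a\,\overline b=n\overline{ab}$, matrix units, the $c_iad_j$ machinery of Lemma~\ref{5.3}) are all beside the point. The underlying error is the claim that ``$B$ may have some zero-ish entries, so I cannot simply compare entrywise.'' The additively archimedean hypothesis on $S$ says that for \emph{every} pair $a,b\in S$ there is $m\in\N$ with $a\leq_S mb$; there is no exceptional class of elements $b$ for which it fails, so it applies to every pair of corresponding entries $(a_{ij},b_{ij})$. The paper's proof is exactly the entrywise argument you dismissed: for each $(i,j)$ choose $m_{ij}\in\N$ and $c_{ij}\in S$ with $a_{ij}+c_{ij}=m_{ij}b_{ij}$ (if $a_{ij}=m_{ij}b_{ij}$ holds with no additive witness, replace $m_{ij}$ by $m_{ij}+1$ and take $c_{ij}=b_{ij}$). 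The only genuine issue is that the multipliers $m_{ij}$ differ from entry to entry, and it is resolved by padding rather than by any structural lemma: set $m=\max_{i,j}m_{ij}+1$ and $d_{ij}=c_{ij}+(m-m_{ij})b_{ij}$; then $a_{ij}+d_{ij}=m_{ij}b_{ij}+(m-m_{ij})b_{ij}=mb_{ij}$ for all $i,j$, so $A+D=mB$ with $D=(d_{ij})$, which is the archimedean condition in $\mathbf{M}_n(S)$. No special elements, no reduction to $\overline a$, and no use of $S+S=S$ is needed.
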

\begin{proof}
 If $S$ is additively archimedean and $A,B\in T=\mathbf{M}_n(S)$, then there exist $c_{ij}\in S$ and positive integers $m_{ij}$ such that $a_{ij}+c_{ij}=m_{ij}b_{ij}$. Setting $d_{ij}=c_{ij}+(m-m_{ij})b_{ij}$, where $m=\max\set{m_{ij}}{i,j}+1$, we get $A+D=mB$.

 Conversely, if $T$ is additively archimedean and $a,b\in S$, then $\overline{a}+C=k\overline{b}$ for a positive integer $k$ and a matrix $C=(c_{ij})\in T$. Now, $a+c_{ij}=kb$ and $S$ is therefore additively archimedean.
\end{proof}

\begin{proposition}\label{4.1}
 Let $S$ be an additively cancellative semiring.  Then the following conditions are equivalent:
 \begin{enumerate}
   \item[(i)] $\mathbf{M}_n(S)$ is conical for every $n\geq 2$.
  \item[(ii)] $\mathbf{M}_2(S)$ is conical.
  \item[(iii)] $S$ is conical and $|SS|\geq 2$.

 \end{enumerate}

\end{proposition}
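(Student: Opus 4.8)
The plan is to reduce the statement to the classical ring‑theoretic fact, recalled in the introduction, that the full matrix ring over a simple ring (with non‑trivial multiplication, whether unital or not) is again simple. Since addition on $\mathbf{M}_n(S)$ is entrywise, $\mathbf{M}_n(S)$ is additively cancellative and --- as already noted --- its difference ring is $\mathbf{M}_n(R)$, where $R=S-S$. Hence ``$\mathbf{M}_n(S)$ is conical'' means exactly ``$\mathbf{M}_n(R)$ is a simple ring'', and ``$S$ is conical'' means ``$R$ is a simple ring''. I would first record the elementary equivalence $|SS|\geq 2\iff RR\neq 0$, where $RR$ denotes the additive subgroup of $R$ generated by all products: this subgroup is generated already by the set $SS$ (expand products in $R=S-S$), so $RR=0$ forces $SS=\{0_R\}$; conversely, if $SS=\{z\}$ is a singleton, then for any $a,b\in S$ we have $z=a(b+b)=ab+ab=z+z$, hence $z=0_R$ by additive cancellation in $R$, so again $SS=\{0_R\}$ and $RR=0$.

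With this dictionary, (i)$\Rightarrow$(ii) is just the case $n=2$. For (ii)$\Rightarrow$(iii), assume $\mathbf{M}_2(R)$ is simple. For every two‑sided ideal $I$ of $R$, the set of $2\times2$ matrices with all entries in $I$ is a two‑sided ideal of $\mathbf{M}_2(R)$ (because $RI\sub I$, $IR\sub I$ and $I$ is additively closed), so by simplicity it is $0$ or $\mathbf{M}_2(R)$, whence $I\in\{0,R\}$; thus $R$ has no proper nonzero ideal. Moreover $\mathbf{M}_2(R)$ has non‑trivial multiplication (being simple) and $RR=0$ would force $\mathbf{M}_2(R)^2=0$; hence $RR\neq0$, i.e.\ $|SS|\geq2$, and in particular $R\neq0$. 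Therefore $R$ is simple, i.e.\ $S$ is conical, which gives (iii).

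Finally, for (iii)$\Rightarrow$(i): $R$ is a simple ring (possibly without unity) with $RR\neq0$, so by the quoted classical theorem $\mathbf{M}_n(R)$ is simple for every $n\geq1$, i.e.\ $\mathbf{M}_n(S)$ is conical for every $n\geq2$. The step I expect to require the most care --- the main obstacle --- is precisely this classical fact in the \emph{non‑unital} setting. A self‑contained argument goes: given a nonzero ideal $J$ of $\mathbf{M}_n(R)$, pick $A=(a_{kl})\in J$ with some entry $a_{pq}\neq0$, and write $B_{kl}(r)$ for the matrix having $r$ in position $(k,l)$ and $0_R$ elsewhere; then $B_{ip}(r)\,A\,B_{qj}(s)=B_{ij}(r\,a_{pq}\,s)\in J$ for all $r,s\in R$ and all $i,j$, so $J$ contains $B_{ij}(x)$ for every $x\in Ra_{pq}R$ and all $i,j$. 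Since $a_{pq}\neq0$ and $R$ is simple, the ideal of $R$ generated by $a_{pq}$ equals $R$, and $RR$ (a nonzero two‑sided ideal) equals $R$, so $R=RRR=R\langle a_{pq}\rangle R\sub Ra_{pq}R$; hence $J$ contains $B_{ij}(x)$ for all $x\in R$ and all $i,j$, i.e.\ $J=\mathbf{M}_n(R)$. Once this is established (or simply cited), the remainder is routine bookkeeping with difference rings.
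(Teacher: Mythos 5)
Your proof is correct, but it takes a different route from the paper's. The paper disposes of both non-trivial implications by invoking its own Theorem \ref{6.4}: since a ring has a zero (hence a multiplicatively absorbing element), Theorem \ref{6.4} applied to $R=S-S$ gives directly that $\mathbf{M}_n(R)$ is simple for some $n\geq 2$ iff it is simple for all $n$ iff $R$ is simple with $|RR|\geq 2$, and the proposition then follows from the dictionary $\mathbf{M}_n(S)-\mathbf{M}_n(S)=\mathbf{M}_n(R)$ together with the observation that a simple ring with $RR=0$ is $\Z/p\Z$ (forcing $S=R$ and $|SS|=1$). You instead re-prove the needed ring-theoretic facts from scratch: the ideal $\mathbf{M}_2(I)$ argument for (ii)$\Rightarrow$(iii), and the classical matrix-units computation $B_{ip}(r)AB_{qj}(s)=B_{ij}(ra_{pq}s)$ plus $Ra_{pq}R=R$ for (iii)$\Rightarrow$(i). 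This is essentially the ring specialization of the paper's Lemma \ref{6.2} and the proof of Theorem \ref{6.4}, so the underlying mathematics is the same; what your version buys is self-containedness in the non-unital setting, at the cost of duplicating work the paper has already done. Your reduction $|SS|\geq 2\iff RR\neq 0$ matches the paper's implicitly used fact and is argued correctly.

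One small point to tighten: in (ii)$\Rightarrow$(iii) you justify ``$\mathbf{M}_2(R)$ has non-trivial multiplication'' by ``being simple,'' but the paper's notion of a simple (difference) ring is the congruence-simple one, which admits $\Z/p\Z$ with zero multiplication. The fix is the cardinality argument the paper itself uses in the other direction: a congruence-simple ring with trivial multiplication is a simple abelian group, hence $\Z/p\Z$, and $|\mathbf{M}_2(R)|=|R|^4$ is never prime (nor can it be infinite in that case), so $\mathbf{M}_2(R)^2\neq 0$ and therefore $RR\neq 0$. With that patch your argument is complete.
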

\begin{proof}
(i)$\Rightarrow$(ii): Obvious.

 (ii)$\Rightarrow$(iii): Assume the condition (ii). Then the difference ring $\mathbf{M}_2(R)=\mathbf{M}_2(S)-\mathbf{M}_2(S)$  is simple. By Theorem \ref{6.4}, the difference ring $R=S-S$ is simple and $RR\neq\{0_R\}$. Hence  $S$ is conical and $|SS|\geq 2$.

 (iii)$\Rightarrow$(i): Assume the condition (iii). Then the difference ring $R=S-S$ is simple. If $RR=\{0_R\}$ then $R$ is finite of prime order, and therefore $S=R$ and $|SS|=1$, a contradiction. Thus $|RR|\geq 2$. Hence, by Theorem \ref{6.4}, the ring $\mathbf{M}_n(R)=\mathbf{M}_n(S)-\mathbf{M}_n(S)$ is simple. This means that the semiring $\mathbf{M}_n(S)$ is conical.
\end{proof}

Now we are ready to prove our main results on congruence-simple additively cancellative matrix semirings.

\begin{theorem}\label{theorem_cancellative_2}
 Let $S$ be an additively cancellative semiring. Assume that at least one of the following two conditions holds:
 \begin{enumerate}
  \item[(a)] For all $a,b\in S$ there is $c\in S$ such that  $c\leq_S a$ and $c\leq_S b$ (i.e., the ordering $\leq_S$ is downwards directed, cf. Remark $\ref{ordering_2}$).
  \item[(b)] $S$ is additively $m$-divisible for some $m\geq 2$.
 \end{enumerate}
 Then the following are equivalent:
 \begin{enumerate}
  \item[(i)] $\mathbf{M}_n(S)$ is congruence-simple for every $n\geq 1$.
  \item[(ii)] $\mathbf{M}_n(S)$ is congruence-simple for at least one $n\geq 2$.
  \item[(iii)] $S$ is congruence-simple and $|SS|\geq 2$.
 \end{enumerate}
\end{theorem}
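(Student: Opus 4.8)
The plan is to prove the chain of implications $(i)\Rightarrow(ii)\Rightarrow(iii)\Rightarrow(i)$, where the first two are essentially automatic. For $(i)\Rightarrow(ii)$ there is nothing to do. For $(ii)\Rightarrow(iii)$, if $\mathbf{M}_n(S)$ is congruence-simple for some $n\geq 2$, then Proposition \ref{6.0.1} gives immediately that $S$ is congruence-simple and that $|SS|\geq 2$; the additive-cancellativity of $S$ is an assumption, so nothing further is needed. The whole substance of the theorem is therefore the implication $(iii)\Rightarrow(i)$.

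For $(iii)\Rightarrow(i)$, suppose $S$ is additively cancellative, congruence-simple, with $|SS|\geq 2$, and satisfies (a) or (b). First I would reduce (b) to (a): if $S$ is additively $m$-divisible for some $m\geq 2$, then since $S$ is additively cancellative it is in fact \emph{uniquely} $m$-divisible (cancellativity forces uniqueness of the $m$-th part), and by Theorem \ref{bashir} the congruence-simple additively cancellative semiring $S$ is additively archimedean; then Proposition \ref{divisible} yields that $\leq_S$ is downwards directed, i.e.\ condition (a) holds. So we may assume (a) throughout. Next I would fix $n\geq 2$ (the case $n=1$ is just that $S$ is congruence-simple) and verify the three conditions of Theorem \ref{bashir} for $\mathbf{M}_n(S)$, whose difference ring is $\mathbf{M}_n(R)$ with $R=S-S$. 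Condition (1), that $\mathbf{M}_n(S)$ is conical, i.e.\ $\mathbf{M}_n(R)$ is simple: by Theorem \ref{bashir} applied to $S$ we know $S$ is conical, so $R$ is simple, and since $|SS|\geq 2$ we have (as in the proof of Proposition \ref{4.1}) $|RR|\geq 2$, so $\mathbf{M}_n(R)$ is simple by Theorem \ref{6.4} applied to the ring $R$; hence $\mathbf{M}_n(S)$ is conical. Equivalently one may just cite Proposition \ref{4.1}. Condition (3), that $\mathbf{M}_n(S)$ is additively archimedean: $S$ is additively archimedean by Theorem \ref{bashir}, and Proposition \ref{3.1} transfers this to $\mathbf{M}_n(S)$.

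The main work, and the place where assumption (a) is actually used, is condition (2): $\mathbf{M}_n(S)$ is bi-ideal-simple. Here I would argue via bi-ideal-freeness. Since $S$ is congruence-simple it is bi-ideal-simple; and since $|S|\geq 2$ (as $|SS|\geq 2$) with no bi-absorbing element (a bi-ideal-simple congruence-simple semiring of size $\geq 2$ cannot have one, and anyway the bi-ideal $S+S$ is nontrivial by the argument in Theorem \ref{6.4} giving $S+S=S$), the only bi-ideals of $S$ are $S$ itself, so $S$ is bi-ideal-free. Now condition (a) is exactly the hypothesis of Proposition \ref{5.13.0}, which then gives that $\mathbf{M}_n(S)$ is bi-ideal-free, in particular bi-ideal-simple. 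With conditions (1)--(3) of Theorem \ref{bashir} all verified for $\mathbf{M}_n(S)$ (which is additively cancellative as $S$ is), we conclude $\mathbf{M}_n(S)$ is congruence-simple, for every $n\geq 1$.

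The main obstacle I anticipate is not any single hard step but rather the careful bookkeeping needed to be sure all hypotheses of the cited results are met before invoking them: one must confirm $S+S=S$ and the absence of a bi-absorbing element to get bi-ideal-freeness of $S$ from its bi-ideal-simplicity, must reduce (b) to (a) using the cancellativity-forced uniqueness of division together with the archimedean property from Theorem \ref{bashir}, and must recheck $|RR|\geq 2$ to apply Theorem \ref{6.4} in the ring setting. Once these small verifications are in place, the three pillars of Theorem \ref{bashir} fall out of Propositions \ref{3.1}, \ref{4.1} (or Theorem \ref{6.4} directly), and \ref{5.13.0}, and the equivalence follows. No step requires a genuinely new idea beyond assembling Proposition \ref{5.13.0}, Proposition \ref{divisible}, and Theorem \ref{bashir}.
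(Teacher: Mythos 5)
Your overall architecture is exactly the paper's: reduce (b) to (a), then verify the three conditions of Theorem \ref{bashir} for $\mathbf{M}_n(S)$ via Propositions \ref{4.1}, \ref{5.13.0} and \ref{3.1}. However, there is a genuine gap in your reduction of (b) to (a). You claim that additive cancellativity forces unique $m$-divisibility, and this is false: any abelian group that is $m$-divisible but has $m$-torsion (e.g.\ the Pr\"ufer group $\Z(2^\infty)$ with zero multiplication, viewed as a semiring) is additively cancellative and additively $m$-divisible, yet $mb=mb'$ does not imply $b=b'$. Cancellativity lets you cancel a common summand, but in $b+(m-1)b=b'+(m-1)b'$ the summands differ on both sides, so no cancellation is available. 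Since Proposition \ref{divisible} genuinely needs \emph{unique} $m$-divisibility, your argument for case (b) does not go through as written. The paper closes this hole using congruence-simplicity rather than cancellativity: the relation $\varrho_m=\set{(x,y)}{mx=my}$ is a semiring congruence, hence either $\varrho_m=id_S$ (which is exactly unique $m$-divisibility) or $\varrho_m=S\times S$; in the latter case $mS$ is a single multiplicatively absorbing element $w$, and one either observes that this contradicts $m$-divisibility together with $|S|\geq 2$, or simply invokes Theorem \ref{6.4} directly (since $S$ is cancellative with $|S|\geq 2$ it has no bi-absorbing element). You should insert this dichotomy where you currently appeal to cancellativity.

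A smaller point: your justification that $S$ is bi-ideal-free is muddled. It is not true in general that a bi-ideal-simple congruence-simple semiring of size $\geq 2$ has no bi-absorbing element (additively idempotent examples exist); what does the work here is again cancellativity: if $o$ were bi-absorbing then $a+o=o=o+o$ for all $a$, and cancelling $o$ gives $a=o$, so $|S|=1$, contradicting $|SS|\geq 2$. With that fixed, bi-ideal-simplicity of $S$ (from Theorem \ref{bashir}) indeed upgrades to bi-ideal-freeness, and the rest of your argument -- Proposition \ref{5.13.0} for condition (2), Proposition \ref{4.1} for (1), Proposition \ref{3.1} for (3) -- is correct and coincides with the paper's.
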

\begin{proof}
  (i)$\Rightarrow$(ii): Obvious.

  (ii)$\Rightarrow$(iii): Follows from Proposition \ref{6.0.1}. 
  
  (iii)$\Rightarrow$(i): Assume the condition (iii). For $k\geq 2$ the relation $\varrho_k$ on $S\times S$, defined as $(x,y)\in\varrho_k$ if and only $kx=ky$, is a semiring congruence on $S$. 
  
  If $\varrho_k=S\times S$ then the element $w=kx$ is multiplicatively absorbing (indeed, $yw=y(kx)=k(yx)=w$ and, similarly, $wy=w$ for all $x,y\in S$). As $S$ is embeddable into a ring and $|S|\geq 2$, the semiring $S$ has no bi-absorbing element. Thus, the semiring $\mathbf{M}_n(S)$ is congruence-simple for every $n\geq 1$, by Theorem \ref{6.4}.
  
  Assume therefore that $\varrho_k\neq S\times S$ for every $k\geq 2$. Since $S$ is congruence-simple, it follows that $\varrho_k=id_S$ for every $k\geq 2$. Therefore, if $S$ is additively $m$-divisible for some $m\geq2$, then $S$ is additively uniquely $m$-divisible.
  
   Hence, in view of Proposition \ref{divisible} and assumptions (a) or (b), we obtain that for every $a,b\in S$ there is $c\in S$ such that $c\leq_S a$ and $c\leq_S b$.
   
  To prove now that for given $n\geq 1$ the semiring  $\mathbf{M}_n(S)$ is congruence-simple,   we need to verify the conditions (1), (2) and (3) in Theorem \ref{bashir}.
 
 First, since $S$ is congruence-simple, $S$ is conical by Theorem \ref{bashir}(1). Hence, by Proposition \ref{4.1}, $\mathbf{M}_n(S)$ is conical too.
 
 Further, since $S$ is embeddable into a ring, $S$ can not contain a bi-absorbing element and therefore every bi-ideal of $S$ has at least two elements. Hence, as the semiring $S$ is bi-ideal-simple, by Theorem \ref{bashir}(2), it follows that $S$ is bi-ideal-free. Therefore, by Proposition \ref{5.13.0},  $\mathbf{M}_n(S)$ is bi-ideal-free.
 
 Finally, by Theorem \ref{bashir}(3), $S$ is additively archimedean. Thus, by Proposition \ref{3.1}, $\mathbf{M}_n(S)$ is additively archimedean, too.
 
We conclude that $\mathbf{M}_n(S)$ is congruence-simple, by Theorem \ref{bashir}.
\end{proof}

In the case that the semiring $S$ is commutative  we can make the characterization from Theorem \ref{theorem_cancellative_2} more precise.

\begin{theorem}\label{theorem_cancellative}
 Let $S$ be an additively cancellative semiring. Assume that at least one of the following two conditions holds:
 \begin{enumerate}
  \item[(a)] $S$ is commutative.
  \item[(b)] $S$ has a unity or an additively neutral element.
 \end{enumerate}
 Then the following are equivalent:
 \begin{enumerate}
  \item[(i)] $\mathbf{M}_n(S)$ is congruence-simple for every $n\geq 1$.
  \item[(ii)] $\mathbf{M}_n(S)$ is congruence-simple for at least one $n\geq 2$.
  \item[(iii)] $S$ is congruence-simple, $|SS|\geq 2$ and for all $a,b\in S$ there is $c\in S$ such that  $c\leq_S a$ and $c\leq_S b$.
 \end{enumerate}
\end{theorem}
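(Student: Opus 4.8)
The plan is to deduce Theorem \ref{theorem_cancellative} from Theorem \ref{theorem_cancellative_2} by showing that, in the presence of either hypothesis (a) or (b), the condition ``$\mathbf{M}_n(S)$ is congruence-simple'' forces the downward-directedness of $\leq_S$, and conversely that downward-directedness is already present in the list (iii) so that Theorem \ref{theorem_cancellative_2} applies directly. Concretely, for (iii)$\Rightarrow$(i) the extra clause ``for all $a,b$ there is $c\leq_S a,b$'' is exactly hypothesis (a) of Theorem \ref{theorem_cancellative_2}, so (i) follows immediately. For (i)$\Rightarrow$(ii) there is nothing to prove, and (ii)$\Rightarrow$(iii) is where the real work lies: given that $\mathbf{M}_n(S)$ is congruence-simple for some $n\geq 2$, Proposition \ref{6.0.1} yields that $S$ is congruence-simple with $|SS|\geq 2$, and it remains to produce, for all $a,b\in S$, a common lower bound $c\leq_S a,b$.

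For the commutative case (a), this is essentially immediate: a congruence-simple semiring is bi-ideal-simple, hence (since $S$ is additively cancellative it embeds in a ring and has no bi-absorbing element) bi-ideal-free; also $\mathbf{M}_n(S)$ congruence-simple implies $\mathbf{M}_n(S)$ is bi-ideal-simple, hence bi-ideal-free by Proposition \ref{5.10}. Then Proposition \ref{commutative_2} applied to the commutative semiring $S$ gives exactly the desired common lower bound. So case (a) reduces to invoking Propositions \ref{6.0.1}, \ref{5.10} and \ref{commutative_2} in sequence.

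For case (b), $S$ has a unity or an additively neutral element. By Theorem \ref{unity}, an additively cancellative congruence-simple semiring with an additively neutral element is a ring, and with a unity is either a ring or commutative. If $S$ is a ring, then $\leq_S$ is the total relation (every element is $\leq_S$ every other, since differences exist in $S$), so the common-lower-bound condition holds trivially — in fact $c=a$ works since $a\leq_S b$ always. If instead $S$ is commutative (the remaining sub-case of the unity hypothesis), we are back in case (a). So case (b) splits into ``$S$ is a ring'' (trivial) and ``$S$ is commutative'' (handled by (a)).

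The main obstacle I anticipate is purely bookkeeping: making sure the two hypotheses (a) and (b) are correctly threaded through the chain of reductions — in particular, verifying in case (b) that one may legitimately apply Theorem \ref{unity} (which needs $S$ congruence-simple and additively cancellative, both available once (ii) holds), and checking that the degenerate ring case does not need $|SS|\geq 2$ to produce a common lower bound (it does not, since totality of $\leq_S$ is automatic). Once these case distinctions are laid out, each branch is a one-line citation of an already-established result, and the theorem follows by combining them with Theorem \ref{theorem_cancellative_2}.
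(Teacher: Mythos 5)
Your proof is correct and follows essentially the same route as the paper's: (iii)$\Rightarrow$(i) is a direct application of Theorem \ref{theorem_cancellative_2}, and (ii)$\Rightarrow$(iii) combines Proposition \ref{6.0.1} with Theorem \ref{unity}, Proposition \ref{5.10} and Proposition \ref{commutative_2} exactly as in the paper. The only (immaterial) difference is the additively-neutral-element sub-case, where the paper just observes that $\omega\leq_S a$ for all $a$ rather than passing through the ring case and the totality of $\leq_S$.
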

\begin{proof}
  (i)$\Rightarrow$(ii): Obvious.

  (ii)$\Rightarrow$(iii): By Proposition \ref{6.0.1}, $S$ is congruence-simple and $|SS|\geq 2$. Assume that either the condition (a) or (b) holds. Let $a,b\in S$. 
  We show that there is $c\in S$ such that $c\leq_S a$ and $c\leq_S b$.
  
  If $S$ has an additively neutral element $\omega\in S$, then $\omega\leq_S a$ and $\omega\leq_S b$. If  $S$ is without an additively neutral element, then $S$ is not a ring, and, by Theorem \ref{unity},  the semiring $S$ is commutative. Now, the semiring $\mathbf{M}_n(S)$ is bi-ideal-simple and, by Proposition \ref{5.10}, also bi-ideal-free. Therefore, by Proposition \ref{commutative_2}, there is $c\in S$ such that $c\leq_S a$ and $c\leq_S b$.

 (iii)$\Rightarrow$(i): Follows from Theorem \ref{theorem_cancellative_2}.
\end{proof}

In view of Theorems \ref{theorem_cancellative_2} and \ref{theorem_cancellative} we propose the following conjecture.

\bigskip
\textbf{Conjecture:} Let $S$ be an additively cancellative semiring. Then the conditions (i), (ii) and (iii) from Theorem \ref{theorem_cancellative} are equivalent.
\bigskip

Let us note that,  by \cite[Theorem 10.1]{simple_comm}, any additively cancellative congruence-simple semiring, that is \emph{commutative} but \emph{not} a ring, is a subsemiring of the positive reals $\R^+$. As it was shown in \cite{cong-simple}, even the semiring $\Q^+$  of positive rationals contains  uncountably many such subsemirings. For the list found in \cite{cong-simple} it is still not clear whether it is complete or not and it seems that congruence-simple subsemirings of $\R^+$ are quite far from being understood. 

\bigskip

The following example shows that a lot of  congruence-simple subsemirings of $\Q^+$ fulfills the condition (iii) in Theorem \ref{theorem_cancellative} and, henceforth,  their corresponding matrix semiring provide a wide class of examples of non-commutative congruence-simple semirings. 

 The semirings in Example \ref{ex_1}  fulfill assumptions (a) in both Theorems \ref{theorem_cancellative_2} and \ref{theorem_cancellative} but do not fulfill any of the conditions (b) in them. Also Theorem \ref{6.4} can not be applied for these semirings.
 
\begin{example}\label{ex_1}
 Consider the following subsemiring of the field of rational numbers \cite{cong-simple} $$S=\set{x\in\Q^+}{a^{v_p(x)} < x}$$ where $0<a<1$ is a real number, $p$ is a prime number and $v_p:\Q^+\to\Z$ the $p$-adic valuation.

 This semiring $S$ has the following properties:
 \begin{enumerate}
 \item[(i)] $S$ is congruence-simple (see \cite[Theorem 2.16]{cong-simple}).
 \item[(ii)] $S$ is without both a zero and a unity.
 \item[(iii)] For all $x,y\in S$ there is $z_0\in S$ such that $z_0\leq_S x$ and $z_0\leq_S y$.
 \item[(iv)] $\mathbf{M}_n(S)$ is congruence-simple and without both a zero and a unity (use Theorem \ref{theorem_cancellative} and Propositions \ref{2.5} and \ref{2.4}).
 \item[(v)] $S$ is not additively $m$-divisible for any $m\geq 2$.
 \end{enumerate}

 We prove the property (iii).  For $x,y\in S$ there is $0<\varepsilon\in\R$ such that $a^{v_p(x)} < x-\varepsilon$ and $a^{v_p(y)} < y-\varepsilon$. Now, there is $k\in\N$ such that $k> v_p(x)$, $k>v_p(y)$ and $a^k<\varepsilon$. By  \cite[Lemma 1.9]{cong-simple}, the set $\set{z\in\Q^+}{v_{p}(z)=k}$ is dense in $\Q^+$ (in the usual topology). Hence there is $z_0\in\Q^+$ such that $a^k<z_0<\varepsilon$ and $v_p(z_0)=k$ and, therefore, we have that $z_0\in S$.

  Further, obviously $x-z_0\in\Q^+$. Since $v_p(z_0)> v_p(x)$ we obtain by the basic properties of $p$-adic valuation that $v_p(x-z_0)=\min\{v_p(x),v_p(z_0)\}=v_p(x)$. Thus $a^{v_p(x-z_0)}=a^{v_p(x)}<x-\varepsilon<x-z_0$ and it follows that $x-z_0\in S$, i.e., $z_0\leq_S x$. By a similar argument we obtain that $z_0\leq_S y$.

 Finally, we prove the property (v). Let $m\geq 2$. It is enough to find $x\in S$ such that $\frac{x}{m}\notin S$.

 We have $v_p(m)\geq 0$ and therefore $1<\frac{m}{a^{v_p(m)}}$. Now, choose an arbitrary $k\in\Z$. It follows that $a^{k}<a^{k}\frac{m}{a^{v_p(m)}}=ma^{k-v_p(m)}$ and by the density argument (see above) there is $x\in\Q^+$ such that $a^k <x<ma^{k-v_p(m)}$ and $v_p(x)=k$. Hence $x\in S$ and, as it holds that $\frac{x}{m}<a^{v_p(x)-v_p(m)}=a^{v_p(x/m)}$, we obtain that $\frac{x}{m}\notin S$.
\end{example}

\begin{example}
 Let $S'=\mathbf{M}_k(S)$, where $k\geq 2$ and $S$ is the semiring from Example \ref{ex_1}. Then one can easily check that, by the properties of Example \ref{ex_1}, $S'$ is a \emph{non-commutative} congruence-simple semiring that fulfills assumption (a) in Theorem \ref{theorem_cancellative_2} but do neither fulfill the condition (b) in the same theorem nor any of the conditions (a) or (b) in Theorem \ref{theorem_cancellative}. Also Theorem \ref{6.4} can  not be applied for $S'$. 
\end{example}

\begin{remark}\label{classification}
 Let $S$ be a finite additively cancellative semiring and $n\geq 2$. Then $\mathbf{M}_n(S)$ is congruence-simple if and only if  $S\cong\mathbf{M}_k(\mathbb{F}_q)$ for some finite field $\mathbb{F}_q$ and $k\geq 1$.
 
 Indeed, a finite additively cancellative semiring $S$ has to be a ring (since a finite cancellative semigroup is a group).  If $\mathbf{M}_n(S)$ is congruence-simple, then  $S$ is congruence-simple and $|SS|\geq 2$ by Theorem \ref{6.0.1}. By the basic classification of finite congruence-simple  semirings \cite[Theorem 12]{monico} and the list of all two-element semirings in \cite{simple} it follows that $S\cong\mathbf{M}_k(\mathbb{F}_q)$ for some finite field $\mathbb{F}_q$ and $k\geq 1$.
 The opposite implication is easy. 
\end{remark}

\end{document}